\newtheorem{theorem}{Theorem}[section] 
\newtheorem{lemma}[theorem]{Lemma}     
\newtheorem{proposition}[theorem]{Proposition}
\theoremstyle{definition}
\newtheorem{example}[theorem]{Example}
\title[Branches on division algebras]
{Branches on division algebras}
\author{Manuel Arenas}
\author{Luis Arenas-Carmona}
\newcommand\Qt{\mathbb Q}
\newcommand\algeb{\mathfrak{A}}
\newcommand\Da{\mathfrak{D}}
\newcommand\Ha{\mathfrak{H}}
\newcommand\oink{\mathcal O}
\newcommand\matrici{\mathbb{M}}
\newcommand\ad{\mathbb{A}}
\newcommand\enteri{\mathbb Z}
\newcommand\bbmatrix[4]{\left(\begin{array}{cc}#1&#2\\#3&#4\end{array}\right)}
\newcommand\lbmatrix[4]{\textnormal{\scriptsize{$\left(\begin{array}{cc}#1&#2\\#3&#4
\end{array}\right)$}}\normalsize}
\begin{document}
\maketitle

\begin{abstract}
We describe the set of maximal orders in a 2-by-2 matrix algebra over a non-commutative local division algebra $B$
containing a given suborder, for certain important families of such suborders, including rings of integers of division subalgebras
of $B$ or most maximal semisimple commutative subalgebras.
\end{abstract}

\section{Introduction}

Let $B$ be a local division algebra, i.e., an $n^2$-dimensional central division algebra over a local field $K$. By an order $\Ha\subseteq\matrici_2(B)$ we mean an $\oink_K$-order, where $\oink_K$ is the ring of integers in $K$. For any such order $\Ha$, let $S(\Ha)$ be the set of maximal orders containing $\Ha$. Let $\mathfrak{T}=\mathfrak{T}(B)$ be the Bruhat-Tits tree \cite{trees} for $PSL_2(B)$, with its vertices identified with the maximal orders in $\matrici_2(B)$.  We call it the BT-tree in all that follows. 
Let $\mathfrak{S}(\Ha)$ be the maximal subgraph whose vertex set is $V\big(\mathfrak{S}(\Ha)\big)=S(\Ha)$.
The subgraph $\mathfrak{S}(\Ha)$ is called the branch of $\Ha$.

The branch $\mathfrak{S}(\Ha)$ is well understood when $B=K$ \cite{eichler2}.
This knowledge is critical in solving the selectivity problem 
\cite{FriedmannQ}, \cite{vigneras},
determining whether every maximal order, in a given global central simple algebra, 
contains an isomorphic copy of a given suborder $\Omega$. This is a problem of interest in differential geometry, 
since spectral properties of some hyperbolic varieties are reduced to selectivity
problems. More specifically, this theory was applied in some early construction of
isospectral, non-isometric Riemanian varieties, an hyperbolic analog to the famous question \emph{Can we hear the shape of a drum?} \cite{vigneras2}, \cite{lino2}, \cite{lino4}.
 There are also numerous questions on the algebraic or arithmethic structure
of maximal orders, or commutative order, that reduce to selectivity problems \cite{cyclic}.
Explicit descriptions of the branch $\mathfrak{S}(\Ha)$ can also be aplied to the study
of quotient graphs in the quaternionic case \cite{rouidqo}, \cite{cqqgvro}.

A precise description of the branch $\mathfrak{S}(\Ha)$, or its generalization in terms of buildings,
 for an arbitrary order $\Ha$, in an arbitrary local central simple algebra $A$, would be equally useful.
Unfortunately, a full description along the lines of the
three types of branches listed in \cite{eichler2} (or Prop. \ref{p21} below)  seems extremely unlikely in 
the light of some results presented in this work. A full description for some natural
families of suborders seems, however, within reach. In fact, the techniques developed
in this work allow us to compute the branch of the ring of integers $\Ha=\oink_E$ for most maximal
semisimple commutative subalgebras $E$ of $A=\matrici_2(B)$.
In fact, when  $A$ is a matrix algebra over $K$, of arbitrary dimension, the corresponding
subcomplex was described in \cite{scffgeo}.  Unfortunately, the technique use therein cannot be
applied to more general central simple algebras.  B. Linowitz and T. Shemanske computed, in a recent 
article, the higher dimensional analog of the branch, 
for the ring of integers of an unramified maximal semisimple commutatative subalgebra of top dimension,
without any restriction on $A$ \cite[Theorem 2.1]{lino5}. We will not go into details of the general definition
of Bruhat-Tits buildings (or BTBs) for $\matrici_n(B)$,
 since the present article deals exclusively with trees. The interested reader can find the details in \cite{build}
or \cite{build2}. Let us just recall that
the BTB is a simplicial complex containning some subcomplexes called appartments whose underlying 
topological space is the $(n-1)$-dimensional affine space.  In this language, their result can be rephrased as follows:
\begin{quote}
\textit{Let $B$ be an $m^2$-dimensional central division algebra over a local field $K$. Let 
$\Ha=\oink_L\cong\prod_{i=1}^r\oink_{L_i}$ be an order in $\mathbb{M}_n(B)$, where each factor
$\oink_{L_i}$ is the ring of integers of a field $L_i$ that is unramified over $K$, and 
$\sum_{i=1}^r[L_i:K]=mn$. Then $\mathfrak{S}(\Ha)$ is contained in an appartment and
its underlying topological space corresponds,with the appropiate identifications, to an $(r-1)$-dimensional linear subspace.}
\end{quote}
In particular, if $\Ha$ is the ring of integers in a maximal unramified subfield, then it is contained in a unique maximal order.
The techniques developped in this work yield the following partial generalization:
\begin{theorem}\label{tho}
Let $B$ be an $n^2$-dimensional central division algebra over a local field $K$. Let 
$\Ha=\oink_L$ be the ring of integers of a $2n$-dimensional commutative semisimple subalgebra
$L\subseteq\mathbb{M}_2(B)$. Then:
 \begin{enumerate}
\item If $L$ is a field and the ramification index $e(L/K)$ is odd, then $\oink_L$ is contained in a unique maximal order.
\item If $L$ is a field, containing a degree $n$ extension $F$ of $K$,  and if $e(L/F)=2$,
 then $\mathfrak{S}(\Ha)$ is a path of length $\frac{2n}{e(L/K)}$.
\item If $L$ is not a field then $\mathfrak{S}(\Ha)$ is a maximal path.
\end{enumerate}
\end{theorem}

If $L$ is a field and  $e(L/K)$ is even, the technical condition in the second case is satisfied,
 for example, whenever $L$ is Galois over the largest unramified subfield $L_{\mathrm{unr}}$,
 as we can choose $F$ as the invariant field $L^{\langle\sigma\rangle}$ for any involution 
$\sigma\in\mathrm{Gal}(L/L_{\mathrm{unr}})$. On the other hand,
 if $L/K$ is a quartic extension whose associated Galois group is $A_4$, there is no intermediate
quadratic extension, and the shape of the branch $\mathfrak{S}(\Ha)$ cannot be determined by the methods
presented here. For the existence of such extensions, see \cite{WJ07}.

The branch  $\mathfrak{S}(\Ha)$ has been described only for a limited number of order families in the
higher dimensional case.  Aside from the results for commutative orders cited above,
we can only mention Shemanske's description of the set of split orders in \cite{soacpib}, which we used in
\cite{scffgeo} to compute spinor class fields for intersections of two maximal orders in higher
dimensional central simple algebras. 
 Our main result in this paper is a  description of the graph $\mathfrak{S}=\mathfrak{S}(\Ha)$ 
whenever $\Ha=\oink_L$ is the ring of integers of an arbitrary subalgebra $L$ of $B$, identified with a 
subalgebra of $\matrici_2(B)$ via the map  $\lambda\mapsto\lbmatrix\lambda00\lambda$. The precise result,
namely Theorem \ref{tha}, is given in next section by its technicality. Here we present two 
consequences of it.
 Both are counterexamples to natural extension to higher dimensions
 of know results in the theory of quaternion algebras.

 The first one is a result due to F.-T. Tu \cite{tu}:
\begin{quote}
\textit{An order in $\matrici_2(K)$ is the intersection of a finite family of maximal orders if and only if it is
the intersection of three maximal orders.}
\end{quote}
This result fails to generalize to $\matrici_2(B)$ for a non-commutative algebra $B$.

\begin{theorem}\label{thb}
Let $B$ be a non-commutative $n^2$-dimensional division algebra over a local field $K$. Let 
$\Ha\subseteq\matrici_2(B)$ be an order contained
in $$\Da^{[2]}=\{x\in\Da|\bar{x}\in Z(\Da/\mathcal{M}^2_\Da)\},$$ where $\mathcal{M}_\Da$ is the unique maximal
 bilateral ideal of the maximal order $\Da$, $\bar{x}$ denotes the image on the quotient, while $Z(R)$
denotes the center of the ring $R$. Then $\Ha$ cannot be the intersection of
 three maximal orders in $\matrici_2(B)$. However, the order $\Da^{[2]}$
is the intersection of a finite family of maximal orders.
\end{theorem}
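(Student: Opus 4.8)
The plan is to work entirely inside the BT-tree $\mathfrak{T}=\mathfrak{T}(B)$, identifying vertices with maximal orders and writing $\Da_w$ for the order at a vertex $w$. I will use two standard facts: first, the set $S(\Ha)$ is \emph{convex} (it spans a subtree), because for any two maximal orders one has $\Da_w\cap\Da_{w'}=\bigcap_{u\in[w,w']}\Da_u$ along the geodesic, so $\Ha\subseteq\Da_w\cap\Da_{w'}$ forces $\Ha\subseteq\Da_u$ for every intermediate $u$; second, since all maximal orders are conjugate I may take $\Da=\matrici_2(\oink_B)$ at a vertex $v$ with $\Lambda=\oink_B^2$, and the stabiliser $GL_2(\oink_B)$ acts transitively on each sphere about $v$ by the elementary divisor theorem for the local ring $\oink_B$.

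First I would pin down $\Da^{[2]}$. Here $\mathcal{M}_\Da=\matrici_2(\pi_B\oink_B)$ and $\Da/\mathcal{M}_\Da^2=\matrici_2(R)$ with $R=\oink_B/\pi_B^2\oink_B$. As $Z(\matrici_2(R))=Z(R)I$ and $R\cong\kappa\oplus\kappa\bar\pi_B$ with $\bar\pi_B^2=0$ and $\bar\pi_B a=\sigma(a)\bar\pi_B$ ($\sigma$ the Frobenius of $\kappa/\finitum_q$, nontrivial since $n\ge 2$), a short computation gives $Z(R)=\kappa^{\sigma}=\finitum_q$, the image of $\oink_K$. Hence
$$\Da^{[2]}=\oink_K I+\matrici_2(\pi_B^2\oink_B).$$
Putting a vertex $w$ at distance $d$ in elementary divisor normal form $\Da_w=\bbmatrix{\oink_B}{\pi_B^{-d}\oink_B}{\pi_B^d\oink_B}{\oink_B}$ and using that $\Da^{[2]}$ is invariant under $GL_2(\oink_B)$-conjugation (its radical part $\mathcal{M}_\Da^2$ is a two-sided ideal and $\oink_K I$ is central), the inclusion $\Da^{[2]}\subseteq\Da_w$ reduces to the single condition $\pi_B^2\oink_B\subseteq\pi_B^d\oink_B$, i.e. $d\le 2$. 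Thus $S(\Da^{[2]})$ is exactly the ball $B_2(v)=\{w:d(v,w)\le 2\}$. The reverse inclusion $\bigcap_{w\in B_2(v)}\Da_w\subseteq\Da^{[2]}$ is a linear-algebra statement over $R$: an $x\in\Da$ lying in every $\Da_w$ with $d(v,w)\le 2$ reduces in $\matrici_2(R)$ to an endomorphism of $R^2$ stabilising every cyclic submodule, hence reduces to the centre. Since $\mathfrak{T}$ is $(q^n+1)$-regular the ball $B_2(v)$ is finite, so $\Da^{[2]}=\bigcap_{w\in B_2(v)}\Da_w$ exhibits $\Da^{[2]}$ as a finite intersection, proving the final assertion.

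For the first assertion I would argue through branching. If $\Ha\subseteq\Da^{[2]}$ then $S(\Ha)\supseteq S(\Da^{[2]})=B_2(v)$, so the subtree $\mathfrak{S}(\Ha)$ contains $B_2(v)$; here $v$ and each of its $q^n+1$ neighbours have degree $q^n+1\ge 5$ inside $B_2(v)$, so $\mathfrak{S}(\Ha)$ has at least $q^n+2\ge 6$ vertices of degree $\ge 3$. I then claim the branch of an intersection of three maximal orders has at most one such \emph{branching} vertex. The local mechanism is: if $u\in S(\Ha)$ has three neighbours $w_1,w_2,w_3\in S(\Ha)$, then for every $x\in\Ha$ the reduction $\bar x$ at $u$ preserves the three distinct lines of $\kappa^2$ determined by $w_1,w_2,w_3$ and is therefore scalar; consequently $\Ha\subseteq\bigcap_{w\in B_1(u)}\Da_w$ and $u$ is fully branching. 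Writing $\Ha=\Da_1\cap\Da_2\cap\Da_3=\bigcap_{z\in Y}\Da_z$ with $Y$ the convex hull of the three vertices (a tripod, or a path), one shows every branching vertex lies on $Y$ with $Y$-degree $\ge 3$; a tripod on three leaves has a unique such vertex (its median) and a path has none, so $\mathfrak{S}(\Ha)$ has at most one branching vertex. This contradicts $q^n+2\ge 6$, so no $\Ha\subseteq\Da^{[2]}$ can be a triple intersection.

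The main obstacle is the final step: bounding the branch of the triple intersection from above, that is, proving the scalar-induced ``filling'' occurs only at the median of $Y$ and does not cascade along the legs. Concretely one must verify that at a vertex of $Y$-degree $2$ the reductions of $\Ha$ fill the whole algebra of matrices diagonal in the two leg-directions, so that precisely the two leg-lines are preserved and no third neighbour enters $S(\Ha)$; this is the noncommutative analogue of the classical Eichler-order branch computation and is exactly where $\matrici_2(B)$ diverges from $\matrici_2(K)$, making the ball $B_2(v)$ unreachable by three orders while it remains a finite intersection.
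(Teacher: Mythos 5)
Your second half (that $\Da^{[2]}$ is a finite intersection) follows the same route as the paper's Lemma 4.1: identify $S(\Da^{[2]})$ with the ball of radius $2$ and intersect over its finitely many vertices. That part is fine, though the reverse inclusion $\bigcap_{w}\Da_w\subseteq\Da^{[2]}$ deserves the explicit conjugation computation the paper gives (conjugating by $\bigl(\begin{smallmatrix}1&0\\r&1\end{smallmatrix}\bigr)$ to force $ra\equiv dr$ for all $r\in\oink_B$, which is what produces the \emph{center} of $\oink_B/\pi_B^2\oink_B$ rather than all of it).

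The first half has a genuine gap, and the intermediate statement you propose is in fact false. Take $B$ the quaternion division algebra over $\Qt_2$ and three maximal orders lying toward the ends $0,1,\infty$, pairwise far apart, with median $\Da_{U_0}=\matrici_2(\oink_B)$. A direct computation gives $\Ha=\{\bigl(\begin{smallmatrix}a&\pi_B^{2m}b\\ \pi_B^{2m}c&d\end{smallmatrix}\bigr):a\equiv d \pmod{\pi_B^{2m}}\}$, and reducing at the vertex $\Da_{U_j}$ (conjugating by $\mathrm{diag}(\pi_B^j,1)$) yields the residual algebra $\{\mathrm{diag}(\sigma^{-j}(\bar a),\bar a)\}$: this is the full scalar field $\mathbb{B}I$ whenever $n\mid j$, so $\Da_{U_0},\Da_{U_2},\Da_{U_4},\dots$ are \emph{all} fully branching vertices of $\mathfrak{S}(\Ha)$, lying on a leg of $Y$ with $Y$-degree $2$. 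So a triple intersection can have arbitrarily many branching vertices, and your "local mechanism" (three branch-neighbours force a scalar reduction, hence full branching) only propagates the branch outward; it provides no upper bound and cannot by itself distinguish $\matrici_2(B)$ from $\matrici_2(K)$, where the theorem fails by Tu's result. What is actually true, and what the proof needs, is only that no two \emph{adjacent} vertices are fully branching, so that $\mathfrak{S}(\Ha)$ contains no ball of radius $2$; in the example above this is visible in the twist $\sigma^{-j}\neq\mathrm{id}$ for $n\nmid j$, which forces a bridge between consecutive nodes. The paper obtains this in general by a mechanism absent from your proposal: $\mathrm{PGL}_2(B)$ acts $3$-transitively on ends of $\mathfrak{T}(B)$, so every intersection of three maximal orders contains a conjugate of $\oink_B$ (Lemma 4.2), and Theorem 1.4 computes $\mathfrak{S}(\oink_B)$ as $\mathfrak{T}(K)/n$ with radius-$1$ roses attached at its nodes; since $n\geq 2$, every node of $\mathfrak{S}(\oink_B)$ has a leaf among its neighbours, so $\mathfrak{S}(\oink_B)$, and a fortiori $\mathfrak{S}(\Ha)\subseteq\mathfrak{S}(u\oink_Bu^{-1})$, contains no ball of radius $2$. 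This is exactly the step you flag as "the main obstacle", and closing it essentially requires importing the paper's Lemma 4.2 together with the structure theorem for $\mathfrak{S}(\oink_B)$.
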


This application requires only the case $L=B$ in Theorem \ref{tha}.
Unfortunately, our method gives no hint on how to compute the minimal integer $t$, 
such that every finite intersection of maximal orders
is the intersection of $t$ maximal orders, nor to determine whether this number exists.

Our second result is an application of Theorem \ref{tha} to selectivity of commutative orders into genera
of non-maximal orders of maximal rank (full orders). This theory has been
studied by several authors in the quaternionic case  \cite{Guo}, 
\cite{Chan}, \cite{lino1}, specially because of the importance of the particular case of Eichler orders
in the spectral theory of hyperbolic varieties. 
For higher dimensional central simple algebras, selectivity of maximal integral domains into maximal orders
has been studied by Linowitz and Shemanske in \cite{lino3} and  \cite{lino4}.
 However, little is known about  selectivity  of commutative orders into
non-maximal full orders  in the higher dimensional case. For intersection of two maximal orders,
this has been done by the second author in some cases \cite{scffgeo}.

In order to make next result precise, we recall a few basic facts from the theory of genera of orders:
\begin{itemize}
\item A genus is a maximal set of full orders in a global central simple algebra whose 
completions at any archimedean place
are conjugates. A commutative order is called selective if it embeds into some, 
but not all, the orders in a particular genus.
\item In an algebra whose dimension exceeds $4$, or in an indefinite quaternion algebra, the proportion 
of conjugacy classes of orders containing a copy of a commutative order $\Ha$,  in any given genus
$\mathrm{gen}(\Da)$, is often of the form $[F:E]^{-1}$, where $E$ in the base field and 
$F=F(\Da|\Ha)$ is a class field called the representation field.
\item In principle, the representation field above can be undefined. Although some examples are known if we
remove the commutativity condition above, it might as well be the case that the representation field
is defined for any commutative order $\Ha$, and any full order $\Da$. This has only been proved
 with some restriction, for instance, it is known when $\Da$ is maximal \cite[Th. 1.1]{abelianos},
or when $\Da$ is strongly unramified in an algebra without partial ramification \cite[Prop. 5.1]{scffgeo}.
\end{itemize}
In particular, when the representation field is defined, the order $\Ha$ is
 selective if and only if the representation field contains $E$ properly.
We recall the precise definitions and part of the general theory in \S\ref{sec3} below.
See \cite{abelianos} or \cite{cyclic} for details.
In a quaternion algebra, we have next result  \cite[Theorem 1.2 and Theorem 1.3]{eichler2}:
\begin{quote}
\emph{If $\Da$ is a finite intersection of  maximal orders in a quaternion 
algebra, the representation field $F(\Da|\Ha)$ splits completely
 at any place where $\Ha$  is contained in infinite many 
local maximal orders.}
\end{quote}

For algebras of the form  $\matrici_2(B)$, where $B$ is a division algebra, we provide the following counter-example
to the straightforward generalization of the previous result:

\begin{theorem}\label{thc}
Let $E$ be a number field, and let $H$ be its Hilbert class field. Let $M$ be a quadratic extension of $E$
contained in $H$. Let $\wp$ and $\wp'$ be two finite places of $E$ that are inert for $M/E$ and let $\tilde B$
be a quaternion division $E$-algebra ramifying only at $\wp$ and $\wp'$. Then there exists a quadratic extension
$N$ of $M$, an embedding $\phi:N\rightarrow\matrici_2(\tilde B)$,
 a rank-4 $\oink_E$-order $\Ha\subseteq N$, and
 a rank-16 $\oink_E$-order $\Da\subseteq\matrici_2(\tilde B)$, satisfying the following conditions:
\begin{enumerate}
\item $M\subseteq F\big(\Da|\phi(\Ha)\big)$, in particular, neither $\wp$ nor $\wp'$ split completely in the representation field,
\item $\Da$ is a finite intersection of maximal orders, and
\item there exist infinitely many local maximal orders in $\matrici_2(\tilde B_\wp)$ containing $\phi(\Ha_\wp)$,
and the same holds at $\wp'$. 
\end{enumerate}
\end{theorem}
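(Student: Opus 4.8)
The plan is to build the data $(N,\phi,\Ha,\Da)$ by hand so that the three assertions become, respectively, a global class field statement, a routine structural remark, and an instance of Theorem \ref{tha}, and then to read everything off from the theory of representation fields recalled in \S\ref{sec3}. First I would fix an embedding $\iota\colon M\hookrightarrow\tilde B$: since $\wp$ and $\wp'$ are inert in $M/E$, the completions $M_\wp,M_{\wp'}$ are fields, so $M$ is a field at both ramified places of $\tilde B$ and hence embeds. The centralizer $Z_{\matrici_2(\tilde B)}\big(\iota(M)\big)$ is the split algebra $\matrici_2(M)$, into which any quadratic extension $N/M$ embeds; choosing $N$ by Grunwald--Wang with prescribed completions above $\wp,\wp'$ and unramified behaviour elsewhere, and composing, I obtain $\phi\colon N\hookrightarrow\matrici_2(\tilde B)$ sending $M$ to scalars through $\iota(M_\wp)\subseteq\tilde B_\wp$. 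I would then take $\Ha\supseteq\oink_M$ a rank-$4$ $\oink_E$-order in $N$ and let $\Da$ be the intersection of the finitely many maximal orders distinguished by the local branches, so that condition (2) is immediate.

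The reduction to local data is the theory of \S\ref{sec3}. As $\tilde B$ ramifies only at the finite places $\wp,\wp'$, the reduced norm carries no archimedean sign conditions, so the spinor class field $\Sigma$ of $\mathrm{gen}(\Da)$ lies over the Hilbert class field $H$, and $M\subseteq H$ by hypothesis. Writing $\Theta_v=\mathrm{Nrd}\big(\{g\in\matrici_2(\tilde B_v)^\times:\phi(\Ha)_v\subseteq g\Da_v g^{-1}\}\big)$ for the local representation image, the field $F=F(\Da|\phi(\Ha))$ is cut out inside $\Sigma$ by $\prod_v\Theta_v$. Thus $M\subseteq F$ is equivalent to the vanishing of the image of each $\Theta_v$ in $\mathrm{Gal}(M/E)$; since $\wp,\wp'$ are inert, at these two places this says exactly that $\Theta_\wp\subseteq N_{M_\wp/E_\wp}(M_\wp^\times)$ and likewise at $\wp'$, i.e. that the reduced norms occurring have even $\wp$-valuation. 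Away from $\wp,\wp'$ the containment $\Theta_v\subseteq N_{M_v/E_v}(M_v^\times)$ is automatic because $M/E$ is everywhere unramified and $N/M$ was chosen unramified there.

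The crux is the computation of $\Theta_\wp$ via Theorem \ref{tha}. The branch description must be used to select the local type of $\Ha_\wp$ so that two things hold at once: the branch $\mathfrak{S}\big(\phi(\Ha)_\wp\big)$ is infinite, giving condition (3); and every order of $\mathrm{gen}(\Da)$ containing $\phi(\Ha)_\wp$ is reached from $\Da_\wp$ by an element of even reduced-norm valuation, so that $\Theta_\wp\subseteq N_{M_\wp/E_\wp}(M_\wp^\times)$. That such confinement is even possible rests on the fact that $\phi$ forces $M_\wp$ to act by scalars through the unramified quadratic subfield $\iota(M_\wp)\subseteq\tilde B_\wp$: the scalar normalizers $\beta I$, $\beta\in\tilde B_\wp^\times$, already have $\mathrm{Nrd}(\beta I)=\mathrm{Nrd}_{\tilde B}(\beta)^2$ of even valuation, and the task is to show, using Theorem \ref{tha}, that for the chosen $\Ha_\wp$ no odd-valuation motion survives. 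Feeding the containments at $\wp,\wp'$ into the reciprocity description then yields $M\subseteq F$, hence condition (1), and in particular $\wp,\wp'$ do not split completely in $F$.

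I expect this confinement to be the main obstacle, because it is exactly where the analogy with $\matrici_2(K)$ fails: there an infinite branch forces $\Theta_\wp=E_\wp^\times$ and complete splitting, since neighbouring maximal orders are joined by odd-valuation elements. Pinning down a local type of $\Ha_\wp$ whose branch is infinite yet whose representation image omits the odd-valuation classes is the delicate step, and it is precisely the kind of information Theorem \ref{tha} is designed to provide. A secondary point, to be settled before \S\ref{sec3} can be applied, is that $F(\Da|\phi(\Ha))$ is well defined for this non-maximal $\Da$; I would arrange the local structure of $\Da$ at $\wp,\wp'$ to lie within the range where the representation field is known to exist.
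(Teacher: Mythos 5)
Your architecture matches the paper's --- embed $M$ into $\tilde B$ using inertness at $\wp,\wp'$, extend to $\phi:N\rightarrow\matrici_2(\tilde B)$ through the centralizer $\matrici_2(M)$ of the scalar copy of $M$, and reduce condition (1) to showing the local spinor images at $\wp,\wp'$ land in $\oink_{\mathfrak{q}}^*E_{\mathfrak{q}}^{*2}=N_{M_{\mathfrak{q}}/E_{\mathfrak{q}}}(M_{\mathfrak{q}}^*)$ --- but you stop exactly at the step that constitutes the proof. You write that pinning down a local type of $\Ha_\wp$ whose branch is infinite yet whose representation image omits the odd-valuation classes is ``the delicate step,'' and you do not carry it out; nor do you specify $\Da_\wp$ beyond ``the intersection of the finitely many maximal orders distinguished by the local branches.'' Without explicit choices of $\Ha_{\mathfrak{q}}$ and $\Da_{\mathfrak{q}}$ there is nothing to compute, so none of (1)--(3) is actually established.

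For comparison, the paper's resolution is: take $N=\tilde NM$ with $\tilde N/E$ quadratic and split at $\wp,\wp'$, so that locally $\phi(N)_{\mathfrak{q}}$ diagonalizes with entries in the unramified quadratic subfield $M'_{\mathfrak{q}}\subseteq\tilde B_{\mathfrak{q}}$; set $\Ha_{\mathfrak{q}}=\{\mathrm{diag}(l_1,l_2)\in\phi(N)\,:\, l_1-l_2\in\mathfrak{m}_{M'_{\mathfrak{q}}}\}$ and $\Da_{\mathfrak{q}}=\Da_{0\mathfrak{q}}^{[1]}$, which by Lemma \ref{blemma} is the intersection of the neighbors of a maximal order, giving (2); condition (3) follows since $\Ha_{\mathfrak{q}}$ is invariant under conjugation by $\mathrm{diag}(\pi_{E_{\mathfrak{q}}},1)$. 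The confinement you flagged is then a direct application of Theorem \ref{tha} to $L=M'_{\mathfrak{q}}$ (unramified quadratic, so $e'=f=2$, $C=L$, and the rose is a $(q^2-q^2,2)$-rose, i.e.\ empty): $\mathfrak{S}(\oink_{M'_{\mathfrak{q}}})=\hat{\mathfrak{T}}(M'_{\mathfrak{q}})\cong\mathfrak{T}(M'_{\mathfrak{q}})/2$, the maximal orders $\Da_1$ with $\oink_{M'_{\mathfrak{q}}}\subseteq\Da_1^{[1]}$ are precisely the nodes of this subdivided tree, and those lie pairwise at even distance, so every conjugator has reduced norm of even valuation. Two further points you underestimate: the containments away from $\wp,\wp'$ are not ``automatic'' but rest on the cited computation of the representation field for maximal orders containing $\oink_N$; and the well-definedness of $F(\Da|\phi(\Ha))$ is not something one can ``arrange'' by staying in a known range --- the paper proves it by establishing both $M\subseteq\Sigma^\Delta$ and $\Sigma^\Gamma\subseteq M$, the latter via the invariance of $\Ha_{\mathfrak{q}}$ under $\mathrm{diag}(\pi_{E_{\mathfrak{q}}},1)$ and diagonal units, which forces the upper and lower representation fields to coincide with $M$.
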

This is a new phenomenon that needs to be taken into account when studying selectivity in higher dimensions.
Note that the order $\Ha$ provided by the theorem is selective for the genus $\mathrm{gen}(\Da)$.
 We construct this counterexample by taking advantage of the periodicity
 of the branches described in Theorem \ref{tha}. We require only the case where
$L=M_\wp$ is an unramified quadratic extension of the local field $K_\wp$.

The subgraphs of the form $\mathfrak{S}=\mathfrak{S}(\Ha)$  in the BT-tree,
 which we call admisible branches in the sequel, are in correspondence with
the intersections of maximal orders. In fact, the maps $\Ha\mapsto\mathfrak{S}(\Ha)$,
and $\mathfrak{S}\mapsto\Ha_{\mathfrak{S}}=\bigcap_{\Da\in V(\mathfrak{S})}\Da$
are inverse bijections on these two restricted sets.
 In \cite{eichler2}, we gave a description of all admisible branches 
(and therefore of all intersections of maximal orders) when $B$ is a field. The finite
intersections, or equivalently the intersections of three maximal orders,
 had already been determined by F.-T. Tu in \cite{tu}. 
The set of  admissible branches is extremely simple in the field case. The full description of it is recalled
in Proposition \ref{p21} bellow. The examples presented in this work show that the situation is 
far more complex for non-commutative division algebras.

\section{Graphs, subdivisions and gluing}

Throughout, by a graph we mean a 5-tuple $\Gamma=(V,A,s,t,r)$ where:
\begin{itemize}
\item $V=V(\Gamma)$ and $A=A(\Gamma)$ are disjoint sets, called the vertex set and the edge set of $\Gamma$,
\item $s,t:A\rightarrow V$ are two maps call source and target, 
 \item $r:A\rightarrow A$ is called the reverse,
\end{itemize}
 and they are assumed to satisfy the following conditions, for any $a\in A$:
\begin{equation}\label{graphdef}
s(a)=t\big(r(a)\big),\qquad r\big(r(a)\big)=a,\qquad r(a)\neq a.
\end{equation}
The valency of a vertex $v$ is the number of edges $a$ satisfying $s(a)=v$ or equivalently, 
the number of edges $a$ satisfying $t(a)=v$. A node is a vertex of valency $3$ or larger. A simplicial map
$\gamma:\Gamma\rightarrow\Gamma'$, where $\Gamma'=(V',A',s',t',r')$ is a second graph, is a pair of maps
$\gamma_V:V\rightarrow V'$ and $\gamma_A:A\rightarrow A'$ satisfying the identities:
$$r'\circ\gamma_A= \gamma_A \circ r,\qquad s'\circ\gamma_A= \gamma_V \circ s,\qquad
t'\circ\gamma_A= \gamma_V \circ t.$$ This defines a category $\mathfrak{Graphs}$ whose objects are graphs
and whose morphisms are simplicial maps.  

 For any positive integer $n$, we set $\mathcal{C}_n=\{0, 1, \dots , n-1\}$ 
and  $\mathcal{C}_n'=\{1, \dots , n-1\}$.
If $\Gamma=(V,A,s,t,r)$ is a graph, we denote by $\Gamma/n$ the $n-$th subdivision of $\Gamma$, i.e.,
 the 5-tuple $(V_n,A_n,s_n,t_n,r_n)$, where each set or function is defined as follows:
\begin{enumerate}
\item $V_n=V \cup A_n'$ where $A_n'$ is the quotient of the cartesian product
$A \times \mathcal{C}_n' \times \{V\}$ by the equivalence relation $\sim$ whose only nontrivial
instance is $(a,k,V)\sim\big(r(a),n-k,V)$ for any edge $a$ and any $k\in\mathcal{C}_n'$. 
Note  the last $V$ in each element of $A_n'$ is simply a reminder that the elements of $A_n'$
 are vertices, not edges. The class of  $(a,k,V)$ is denoted $(a,k)_V$.
\item  $A_n=A \times \mathcal{C}_n \times \{A\}$,
and we write $(a,k)_A$ instead of $(a,k,A)$ for the sake of uniformity.
\item The new reverse $r_n:  A_n \longrightarrow A_n$ is defined by the formula
$r_n\left((e,k)_A\right)=\big(r(e),n-1-k\big)_A$.\item
 The new source and target are defined by the following formulas:
$$s_n\big((e,k)_A\big)=\begin{cases} s(e)& \textnormal{if } k=0 \\(e,k)_V &\textnormal{ otherwise} 
\end{cases},
\quad t_n\big((e,k)_A\big)=\begin{cases}  t(e) & \textnormal{ if } k=n-1\\(e,k+1)_V &\textnormal{otherwise} \end{cases}. $$
\end{enumerate}

To prove that $\Gamma/n$ thus defined is a graph, we just need to check, case by case,
 that the maps $r_n$, $s_n$, and $t_n$ satisfy all conditions in (\ref{graphdef}).
 As an example, we assume $k\neq 0$ and observe that \small
$$t_n\circ r_n\big((e,k)_A\big)=t_n\Big(\big(r(e),n-1-k\big)_A\Big)=\big(r(e),n-k\big)_V
= (e,k)_V= s_n\big((e,k)_A\big).$$ \normalsize
All other cases are analogous. In current literature,
a graph $\Gamma$ is said to be subdue to a graph $\Delta$ if there is a natural number $n$
such that the $n$-th subdivision  $\Gamma/n$ of $\Gamma$ is isomorphic, as an element of
$\mathfrak{Graphs}$, to a subgraph of $\Delta$. 
 The particular case $\Gamma/2$ is usually called the barycentric subdivision of $\Gamma$. 
Note that for any graph $\Gamma$, the nodes of $\Gamma/n$
can be identified with the nodes in $\Gamma$, while the vertices in $A_n'$ have always valency $2$. 

Let $\mathfrak{R}$ be 
the graph whose vertex set is $V(\mathfrak{R})=\enteri$ and there is
an edge $a_{i,j}$ such that $s(a_{i,j})=i$ and $t(a_{i,j})=j$ for every pair $(i,j)\in\enteri^2$
satisfying $|i-j|=1$. This graph is called the real line graph.
An integral interval is a connected subgraph $\mathfrak{I}\subseteq\mathfrak{R}$. For any subgraph $\mathfrak{G}\subseteq\mathfrak{T}$, a path in $\mathfrak{G}$ is an injective simplicial map
$\gamma:\mathfrak{I}\rightarrow \mathfrak{G}$, where $\mathfrak{I}$ is an integral interval.
If $\mathfrak{I}=\mathfrak{R}$, we call it a maximal path.
 Two path 
$\gamma:\mathfrak{I}\rightarrow \mathfrak{T}$ and 
$\gamma':\mathfrak{I}'\rightarrow \mathfrak{T}$ are equivalent if there exists an lnteger 
$n$ such that $\gamma=\gamma'\circ\sigma[n]$ where $\sigma[n]$ is the shift by $n$, i.e., the 
simplicial map defined on vertices by $\sigma[n]_V(i)=i+n$.
If, up to equivalence, $\mathfrak{I}=\mathfrak{N}$ is the interval whose vertex set is
 $V(\mathfrak{N})=\{0,1,2,\dots\}$, then
$\gamma$ is called a ray. A line is the image of a path.
The length of an interval with $n+1$ vertices is defined as $n$. This defines the length of a path or line
in an obvious way. For any vertex $\Da$ in $\mathfrak{T}$, and for every
natural number $r$, the ball of radius $r$, denoted $\mathfrak{B}(\Da;r)$,
 is the largest subgraph containing no vertex at a distance larger than $r$ from $\Da$.
It can also be defined as the union of all lines of length $r$ containing $\Da$.
 In this notations, next result follows from the proof of Theorem 1.4 in \cite{eichler2}:
\begin{proposition}\label{p21}
Assume $B=K$ is a field. Then
all admisible branches $\mathfrak{S}=\mathfrak{S}(\mathfrak{H})$ are described in the following list:
\begin{enumerate}
\item The $r$-thick line $\mathfrak{S}=\bigcup_{i\in V(\mathfrak{I})}\mathfrak{B}(\gamma(i);r)$, 
where $\gamma:\mathfrak{I}\rightarrow\mathfrak{T}$ is a path.
\item The infinite foliage $\mathfrak{S}=\bigcup_{i\in V(\mathfrak{N})} \mathfrak{B}(\nu(i);i)$, 
where $\nu:\mathfrak{N}\rightarrow \mathfrak{T}$ is a ray. This type of brach is attained only by the order
generated by a nilpotent element. 
\item $\mathfrak{S}=\mathfrak{T}$, attained by the trivial order $\Ha=\oink_K$.
\end{enumerate}
\end{proposition}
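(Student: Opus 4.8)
The plan is to work in the standard lattice model of the tree \cite{trees}: a vertex is the homothety class $[L]$ of an $\oink_K$-lattice $L\subseteq K^2$, the maximal order attached to $[L]$ is $\mathrm{End}_{\oink_K}(L)$, and two classes are adjacent precisely when they admit representatives with $L\supsetneq L'\supsetneq\varpi L$, where $\varpi$ is a uniformizer and $\kappa=\oink_K/\varpi\oink_K$ the residue field, of cardinality $q=\#\kappa$. Under this dictionary $\Da=\mathrm{End}(L)$ contains $\Ha$ if and only if $L$ is an $\Ha$-module, so $V\big(\mathfrak{S}(\Ha)\big)$ is the set of classes of $\Ha$-stable lattices. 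First I would record that $\mathfrak{S}(\Ha)$ is convex: sums and intersections of $\Ha$-stable lattices are again $\Ha$-stable, and every lattice on the geodesic joining $L_1\supseteq L_2$ has the form $L_2+\varpi^jL_1$, hence is stable whenever $L_1,L_2$ are. Thus $\mathfrak{S}(\Ha)$ is a geodesically closed subtree, and the problem reduces to classifying those convex subtrees fixed by $\mathfrak{S}\mapsto\Ha_{\mathfrak{S}}\mapsto\mathfrak{S}(\Ha_{\mathfrak{S}})$.

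The engine of the argument is the depth function $m(\Da)=\sup\{r\ge0:\Ha\subseteq\oink_K+\varpi^r\Da\}$, with $m(\Da)=\infty$ exactly when $\Ha=\oink_K$ (in which case $\mathfrak{S}(\Ha)=\mathfrak{T}$, giving case (3)). I would prove the ball identity $\bigcap_{\Da'\in\mathfrak{B}(\Da;r)}\Da'=\oink_K+\varpi^r\Da$, which converts depth into geometry: $\mathfrak{B}(\Da;r)\subseteq\mathfrak{S}(\Ha)$ if and only if $m(\Da)\ge r$. In particular $\Da$ is a node iff $m(\Da)\ge1$, i.e.\ iff the reduction of $\Ha$ modulo the radical $\varpi\Da$ is scalar. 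Since $\varpi\,\mathrm{End}(L)\subseteq\mathrm{End}(L')$ for adjacent classes, $m$ satisfies $m(\Da')\ge m(\Da)-d(\Da,\Da')$; together with the fact that $r$-interiors of convex subtrees are convex, this shows each superlevel set $\{m\ge r\}$ is convex and that $\mathfrak{S}(\Ha)=\bigcup_{\Da}\mathfrak{B}\big(\Da;m(\Da)\big)$ is an increasing union of concentric collars around the cores $\{m\ge r\}$.

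The decisive local input, and the feature that singles out the rank-two case, is that a core cannot branch. Suppose $\sigma:=\sup_{\Da}m(\Da)<\infty$ and $\Da=\mathrm{End}(L)$ lies in the top core $\{m=\sigma\}$, and let $w$ be an adjacent vertex with $m(w)\ge\sigma$, cut out by the line $\ell_w\subseteq L/\varpi L$. For any $x=c+\varpi^{\sigma}y\in\Ha$ (with $c\in\oink_K$, $y\in\Da$) one also has $x=c'+\varpi^{\sigma}y'$ with $y'\in\mathrm{End}(L_w)$, whence $y-y'\in KI$ and therefore $y\in\mathrm{End}(L_w)$, since scalars stabilise every lattice; reducing modulo $\varpi L$ shows that $\bar y$ fixes $\ell_w$. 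As $m(\Da)=\sigma$ exactly, some such $\bar y$ is non-scalar in $\matrici_2(\kappa)$, and a non-scalar $2\times2$ matrix fixes at most two lines. Hence at most two neighbours of $\Da$ lie in the core, so $\{m=\sigma\}$ is a path. It is precisely this lemma that collapses in higher rank, where a non-scalar matrix can fix many subspaces, and this is why the technique does not carry over to $\matrici_n(B)$.

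Finally I would run the trichotomy on $\sigma=\sup_{\Da}m(\Da)$. If $\sigma<\infty$, the top core is a finite path, a ray or a line $\gamma(\mathfrak{I})$, and the collar description yields the uniform $\sigma$-thick line $\mathfrak{S}(\Ha)=\bigcup_i\mathfrak{B}\big(\gamma(i);\sigma\big)$ of case (1), with $\Ha$ conjugate to $\oink_K+\varpi^{\sigma}\Ea$ for the order $\Ea$ cut out by the core (an Eichler order when $\gamma$ is finite, a maximal order when $\gamma$ is a point). If $\sigma=\infty$ while every $m(\Da)$ is finite, the nested cores force a single ray along which $m$ increases without bound; calibrating the collar radii against these depths recovers exactly the infinite foliage $\bigcup_i\mathfrak{B}\big(\nu(i);i\big)$ of case (2), and a direct computation of $\Ha_{\mathfrak{S}}=\bigcap_{\Da}\Da$ identifies it with the order generated by a nilpotent element, giving the stated uniqueness. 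The main obstacle lies in this last passage from local depth data to the exact global profile: one must show that saturation forces the thickness to be constant along the spine---a protruding tail is impossible because its junction vertex would acquire scalar reduction and hence be a full node---and, in the borderline case, that the only admissible unbounded profile of $m$ along a ray is the linear one defining the foliage. Once these rigidity statements are in place, checking that each of the three shapes is genuinely realised---by $\oink_K+\varpi^{\sigma}\Ea$, by the nilpotent order, and by $\oink_K$---completes the classification.
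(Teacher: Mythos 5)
First, note that the paper itself gives no proof of Proposition~\ref{p21}: it is quoted as following from the proof of Theorem~1.4 of \cite{eichler2}. So your argument can only be judged on its own terms; its skeleton (the identity $\bigcap_{\Da'\in\mathfrak{B}(\Da;r)}\Da'=\oink_K+\varpi^r\Da$, which is exactly Lemma~\ref{blemma} specialized to $B=K$, the depth function $m$, and the ``at most two fixed lines of a non-scalar matrix'' argument showing the top core is a path) is sound and in the spirit of what the paper does elsewhere. There are, however, two genuine gaps. The first is that the auxiliary fact you invoke --- ``$r$-interiors of convex subtrees are convex'' --- is false. Take $C=\mathfrak{B}(\Da_1;5)\cup\mathfrak{B}(\Da_2;5)$ with $d(\Da_1,\Da_2)=10$: this is a convex subtree, but its $1$-interior is $\mathfrak{B}(\Da_1;4)\cup\mathfrak{B}(\Da_2;4)$, which is disconnected. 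Convexity of $\{m\ge r\}$ is true, but only because $\mathfrak{S}(\Ha)$ is a branch and not an arbitrary convex subtree; the correct justification is the direct computation that for $\Da$ on the geodesic joining $\Da_1$ to $\Da_2$ one has $(\oink_K+\varpi^r\Da_1)\cap(\oink_K+\varpi^r\Da_2)\subseteq\oink_K+\varpi^r\Da$ (in suitable coordinates $\Da_i=\lbmatrix{\oink_K}{\varpi^{i}\oink_K}{\varpi^{-i}\oink_K}{\oink_K}$ and the intersection is $\oink_K+\varpi^r\Ea$ for the Eichler order $\Ea$ cut out by the geodesic). A smaller instance of the same carelessness occurs when you deduce $y\in\mathrm{End}(L_w)$ from $y-y'\in KI$ ``since scalars stabilise every lattice'': only integral scalars stabilise lattices, and one must first check $y-y'\in\oink_KI$ (e.g.\ via traces when the residue characteristic is odd, or via integrality of eigenvalues in general).

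The second gap is that the two rigidity statements you yourself flag as ``the main obstacle'' --- that $\mathfrak{S}(\Ha)$ is exactly the $\sigma$-collar of the top core, and that in the unbounded case the profile of $m$ along the ray is exactly linear --- are announced but never proved, and they are the actual content of cases (1) and (2). They are not hard, but they are not one-liners either: the mechanism is again the intersection computation above. For instance, if $v_0$ lies in the core with $m(v_0)=\sigma$ and $\mathfrak{S}(\Ha)$ contains a path $v_0,v_1,\dots,v_N$ leaving the core, then $\Ha\subseteq(\oink_K+\varpi^{\sigma}v_0)\cap\bigcap_{j}v_j=\oink_K+\varpi^{\sigma}\bigcap_{j\le N-\sigma}v_j$, so $m(v_1)\ge\sigma$ as soon as $N>\sigma$, forcing $v_1$ into the core and killing the protruding tail; in the unbounded case the same computation shows $\Ha\subseteq\oink_KI+\varpi^{s}\oink_K\eta$ for a nilpotent $\eta$ and $s=\sup_i\big(m(\nu(i))-i\big)$, which pins down the linear profile and identifies the branch with the foliage. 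Until these steps are written out, the proposal is a correct and workable plan rather than a proof.
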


By a $(t,r)$-rose, as in Fig. 1, we mean a tree $\mathfrak{Z}$ with one distinguished vertex $c$
of valency $t$, called the center,  such that every path in $\mathfrak{Z}$
 starting from $c$ has length $r$ or less.  Attaching a rose to a 
vertex $v$ of a graph $\mathfrak{G}$ means identifying $c$ with $v$ in the disjoint union 
of the graph and the rose. If we let $\mathfrak{P}=(\{p\},\emptyset,\emptyset,\emptyset,\emptyset)$
be the graph with one vertex and no edges, this attachment can be seen as an amalgamated sum
in $\mathfrak{Graphs}$ corresponding to the maps $\alpha:\mathfrak{P}\rightarrow\mathfrak{Z}$ and
$\beta:\mathfrak{P}\rightarrow\mathfrak{G}$ satisfying $\alpha_V(p)=c$ and $\beta_V(p)=v$. 
The corresponding definition for families is analogous. Two $(t,r)$-roses are isomorphic if there is an
isomorphism in $\mathfrak{Graphs}$ between them preserving the center. We call $r$ the radius of the rose.

\begin{figure}
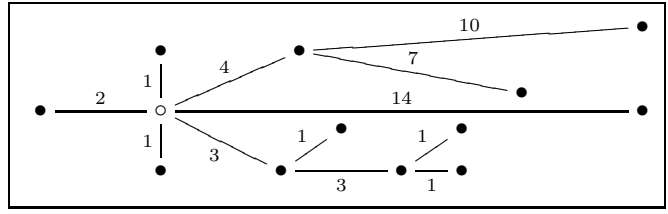

\[
\fbox{ \xygraph{
!{<0cm,0cm>;<.8cm,0cm>:<0cm,.8cm>::} 
!{(7,0) }*+{\circ}="O2"
!{(15,0) }*+{\bullet}="a2" !{(9.3,1) }*+{\bullet}="b2"  !{(15,1.4) }*+{\bullet}="b3"
!{(13,.3) }*+{\bullet}="b4" !{(7,1) }*+{\bullet}="c2"
 !{(5,0) }*+{\bullet}="e2" 
!{(7,-1) }*+{\bullet}="g2" !{(9,-1) }*+{\bullet}="h2" !{(11,-1) }*+{\bullet}="h3" !{(12,-1) }*+{\bullet}="h4" 
!{(12,-0.3) }*+{\bullet}="h5" !{(10,-0.3) }*+{\bullet}="h6" 
"O2"-^{14}"a2" "O2"-^{4}"b2" "O2"-^{1}"c2"
"O2"-_{2}"e2"
"O2"-_{1}"g2"  "O2"-_{3}"h2" "h2"-_{3}"h3" "h3"-_{1}"h4"  "h3"-^{1}"h5"
"h2"-^{1}"h6"  "b2"-^{10}"b3" "b2"-^{7}"b4" 
}} 
\]
\caption{A  $(6,14)$-rose. The white circle denotes the center.}
\end{figure}

\begin{theorem}\label{tha}
Let $B$ be a $n^2$-dimensional central division algebra over a local field $K$.
Assume that the residue field of $K$ is the field $\mathbb{F}_q$ with $q$
elements.
 Let $L\subseteq B$ be a simple subalgebra. Let $C\subseteq B$ be the centralizer of $L$.
Let $e=e(L/K)$ be the ramification index, and let $e'=e(B/L)=n/e$. Let
$f=f(L/K)$ be the residual degree.
Then $\mathfrak{S}(\Ha)$  is isomorphic to the graph obtained by attaching an isomorphic 
$(q^n-q^{e'},e')$-rose to every node of $\mathfrak{T}(C)/f$.
\end{theorem}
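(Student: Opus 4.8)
\section*{Proof proposal for Theorem \ref{tha}}

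The plan is to work throughout with the lattice description of $\mathfrak{T}(B)$. Write $V=B^2$ as a right $B$-vector space, so that $\matrici_2(B)=\mathrm{End}_B(V)$ acts on the left; then each vertex of $\mathfrak{T}(B)$ is the homothety class of a full right $\oink_B$-lattice $\Lambda\subseteq V$, two lattices being homothetic when they differ by right multiplication by a power of a uniformizer $\pi_B$. Since the diagonal copy of $L$ acts on $V$ by left scalar multiplication, the maximal order $\mathrm{End}_{\oink_B}(\Lambda)$ contains $\oink_L$ precisely when $\oink_L\Lambda=\Lambda$, so $V\big(\mathfrak{S}(\Ha)\big)$ is the set of classes of left-$\oink_L$-stable right-$\oink_B$-lattices. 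The adjacency inherited from $\mathfrak{T}(B)$ is read off from the residue space $\overline\Lambda=\Lambda/\mathcal{M}_B\Lambda$, a two-dimensional right $\finitum_{q^n}$-space whose lines (the points of $\mathbb{P}^1(\finitum_{q^n})$) index the $q^n+1$ neighbours; a neighbour lies in $\mathfrak{S}(\Ha)$ exactly when the corresponding line is stable under the induced left action of $\oink_L$. The first task is to understand this induced action, and the key point is that it has two qualitatively different ingredients: the unramified part of $\oink_L$ acts through the residue field $\finitum_{q^f}$ (via the Teichm\"uller lift, which lies in the unramified subfield of $B$) as a semisimple $\finitum_{q^n}$-linear operator, whereas a uniformizer $\pi_L$, which has $B$-valuation $v_B(\pi_L)=e'$, shifts the $\mathcal{M}_B$-adic filtration and contributes a \emph{nilpotent} operator on the successive quotients.

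I would first isolate the \emph{nodes} as those classes on which all of $\oink_L$ acts by scalars on $\overline\Lambda$, so that every line is stable and the vertex has full valency $q^n+1$. I claim these are exactly the classes $\iota([\Lambda_C])=[\Lambda_C\otimes_{\oink_C}\oink_B]$ obtained from $\oink_C$-lattices $\Lambda_C\subseteq C^2$ by extension of scalars along $\oink_C\hookrightarrow\oink_B$; since $\oink_B$ is free over $\oink_C$, the map $\iota$ is injective, and showing that scalar action on $\overline\Lambda$ forces descent to $\oink_C$ gives surjectivity onto the full-valency vertices. This identification is $GL_2(C)$-equivariant, where $GL_2(C)$ centralises the diagonal $L$ and hence preserves $\mathfrak{S}(\Ha)$, and it transports the vertex-transitive $GL_2(C)$-action on $\mathfrak{T}(C)$ to the nodes. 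Next I would compute the distance in $\mathfrak{T}(B)$ between the $\iota$-images of two adjacent vertices of $\mathfrak{T}(C)$. Because a uniformizer $\pi_C$ of $C$ satisfies $\pi_C\oink_B=\mathcal{M}_B^{\,f}$ (the ramification of $B$ over $C$ being $f$), an edge $\mathcal{M}_C\Lambda_C\subsetneq\Lambda_C'\subsetneq\Lambda_C$ becomes after extension a chain $\mathcal{M}_B^{\,f}\Lambda\subseteq\Lambda'\subseteq\Lambda$ with elementary divisors $(1,\pi_B^{\,f})$; its geodesic has length $f$, every intermediate lattice is again left-$\oink_L$-stable, and on each such intermediate vertex the unramified part of $\oink_L$ acts through two distinct embeddings $\finitum_{q^f}\hookrightarrow\finitum_{q^n}$ (conjugate under $\pi_B$), so exactly two lines are stable and the valency is $2$. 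This identifies the subgraph spanned by the nodes and these geodesics with $\mathfrak{T}(C)/f$, the backbone of the statement.

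It remains to analyse, at a fixed node $\iota([\Lambda_C])$, the neighbours that do \emph{not} lie on the backbone. Here $\overline\Lambda=\overline{\Lambda_C}\otimes_{\finitum_{q^{e'}}}\finitum_{q^n}$ carries a natural $\finitum_{q^{e'}}$-rational structure coming from the residue field of $C$, and I would show that the $C$-rational lines, the $q^{e'}+1$ points of $\mathbb{P}^1(\finitum_{q^{e'}})\subseteq\mathbb{P}^1(\finitum_{q^n})$, are precisely the backbone directions, while for a line outside $\mathbb{P}^1(\finitum_{q^{e'}})$ the nilpotent contribution of $\pi_L$ fails to stabilise it. A direct computation on $\Lambda'/\mathcal{M}_B\Lambda'$ should then show that such a neighbour has valency $1$ when $e'=1$, and in general that iterating the construction along these directions yields a finite tree in which every branch terminates after $e'$ steps, the depth being governed by $v_B(\pi_L)=e'$, the level at which the nilpotent operator annihilates the relevant quotient. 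Counting the non-backbone lines gives $q^n-q^{e'}$ initial rose directions at each node, and the transitivity of $GL_2(C)$ on the nodes guarantees that the resulting rose is the same at every node, so that $\mathfrak{S}(\Ha)$ is obtained from $\mathfrak{T}(C)/f$ by attaching one isomorphic $(q^n-q^{e'},e')$-rose at each node.

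The step I expect to be the main obstacle is the last one: pinning down the precise internal combinatorics of the rose, that is, its branching at each level and not merely its radius $e'$ and the valency $q^n-q^{e'}$ of its centre, requires careful bookkeeping of how the ramified and unramified parts of $\oink_L$ interact on the iterated quotients $\mathcal{M}_B^{\,k}\Lambda/\mathcal{M}_B^{\,k+1}\Lambda$, and a control of which nilpotent-unstable flags survive at each depth. By contrast, the identification of the backbone with $\mathfrak{T}(C)/f$ and the node characterisation via $\iota$ are comparatively formal consequences of the extension-of-scalars functor together with the valuation identities $v_B(\pi_L)=e'$ and $\pi_C\oink_B=\mathcal{M}_B^{\,f}$; it is in the depth-$e'$ analysis of the nilpotent action, rather than in these identifications, that the technical heart of the argument lies.
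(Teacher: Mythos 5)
Your backbone analysis is essentially the lattice-language version of what the paper does: the identification of the convex hull of $\mathbb{P}^1(C)$ with $\mathfrak{T}(C)/f$, the valuation identities $v_B(\pi_L)=e'$ and $v_B(\pi_C)=f$, and the two-eigenvalue argument showing that the intermediate vertices of a length-$f$ geodesic are bridges are all correct and match Lemmas \ref{l31} and \ref{l33}. The first genuine gap is your characterisation of the nodes: the claim that the full-valency vertices are \emph{exactly} the classes $\iota([\Lambda_C])$ coming from $\oink_C$-lattices is false whenever $f<e'$, and the surjectivity argument you propose (``scalar action on $\overline\Lambda$ forces descent to $\oink_C$'') cannot be repaired, because scalar action on the single quotient $\Lambda/\mathfrak{m}_B\Lambda$ is a purely residual condition that does not see $C$. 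Concretely, take $n=8$ and $L=K[E,\pi_B^4]$ with $E/K$ unramified quadratic inside the maximal unramified subfield $F$, so that $f=2$, $e'=4$ and $C=K[F_4,\pi_B^2]$ with $F_4$ the quartic unramified subfield. The lattice $\Lambda=\left(\begin{smallmatrix}\pi_B^2\\0\end{smallmatrix}\right)\oink_B+\left(\begin{smallmatrix}\epsilon\\1\end{smallmatrix}\right)\oink_B$, i.e.\ the ball $\epsilon+\mathfrak{m}_B^2$, with $\epsilon\in\oink_F$ a Teichm\"uller representative of an element of $\finitum_{q^8}\setminus\finitum_{q^4}$, is $\oink_L$-stable and $\oink_L$ acts on $\overline\Lambda$ by scalars ($\oink_E$ commutes with $\epsilon$ and $\pi_L=\pi_B^4$ acts by $0$), so it is a node; yet $\epsilon+\mathfrak{m}_B^2$ meets $C$ in the empty set, so this class is not an $\iota$-image. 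This is not a pathology: Theorem \ref{thd} of the paper shows the roses themselves contain nodes at every level divisible by $f$, so any argument forcing all nodes onto the backbone proves something false.

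The second gap is in the rose analysis, which you rightly flag as the main obstacle, but the mechanism you sketch is the wrong one. At a node every one of the $q^n+1$ lines of $\overline\Lambda$ is $\oink_L$-stable --- that is what full valency means --- so the assertion that $\pi_L$ ``fails to stabilise'' the non-$C$-rational lines contradicts your own setup, and nilpotence of $\pi_L$ on graded pieces cannot by itself terminate the branches. The paper obtains finiteness and the radius bound by two different devices: finiteness of each rose follows from a convergence argument (an infinite descending chain of stable balls off the backbone would converge to a point $b$ whose column $\left(\begin{smallmatrix}b\\1\end{smallmatrix}\right)\oink_B$ is $\oink_L$-stable, forcing $b\in C$ and putting the chain back on the backbone), while the depth bound $e'$ comes from conjugating a center $\epsilon$ with $\bar\epsilon\notin\mathbb{C}$ by the \emph{unit} $1+\pi_L\in\oink_L^*$, using $(1+\pi_L)\epsilon(1+\pi_L)^{-1}\equiv\epsilon+\pi_L\big(\epsilon-\sigma^{e'}(\epsilon)\big)\pmod{\pi_L^2}$ to show that every stable ball around $\epsilon$ has radius at least $\rho(\pi_L)=\rho(\pi_B)^{e'}$. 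Without these two ingredients your outline yields the containment $\mathfrak{T}(C)/f\subseteq\mathfrak{S}(\Ha)$ and the count $q^n-q^{e'}$ of rose directions at a backbone node, but neither the finiteness, nor the depth, nor the internal shape of the roses.
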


 If $e'=1$, there is a unique possible rose, 
so the shape of the graph is completely determined by Theorem 1.1.
This is the case when $L$ is a maximal, totally ramified subfield, or when $L=B$.
 On the other hand, when $e'=n$, no roses need to be attached, and the graph is determined by  Theorem 1.1 also.
This holds for an unramified subfield.
 Intermediate cases are more involved, but we can still give a full description of the branch 
$\mathfrak{S}(\Ha)$ in this case. We pospone this until \S\ref{sec4}, since this general
case is not used in the aplications. See Figure 2 for some examples.

\newcommand\clubs{\textnormal{\tiny$\clubsuit$}}
\begin{figure}
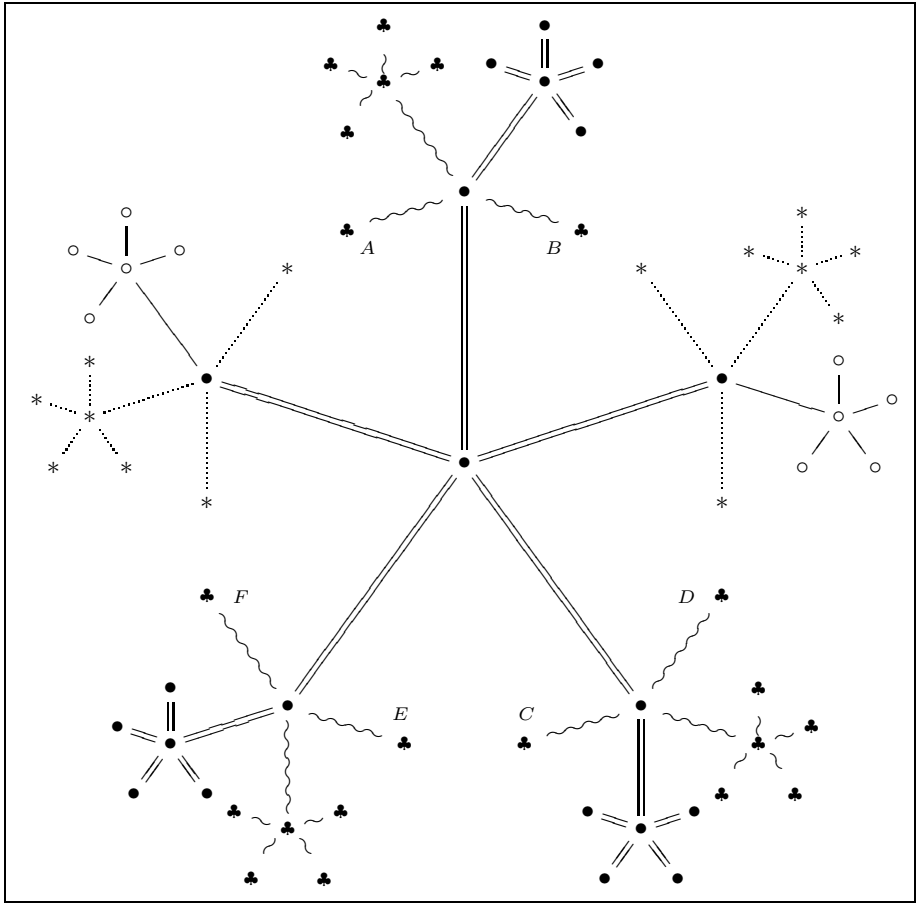

\[
\fbox{ \xygraph{
!{<0cm,0cm>;<.4cm,0cm>:<0cm,.4cm>::} 
!{(-3.2,7) }*+{{}^A}="mna" !{(3,7) }*+{{}^B}="mnb"
!{(2.1,-8.5) }*+{{}^C}="mnc" !{(7.4,-4.6) }*+{{}^D}="mnd"
!{(-2.1,-8.5) }*+{{}^E}="mne" !{(-7.4,-4.6) }*+{{}^F}="mnf"
!{(0,0) }*+{\bullet}="O"
!{(0,9) }*+{\bullet}="a1" !{(8.559,2.781) }*+{\bullet}="a2"  
 !{(5.877,-8.09) }*+{\bullet}="a3"    !{(-5.877,-8.09) }*+{\bullet}="a4"
!{(-8.559,2.781) }*+{\bullet}="a5"
!{(-3.89,7.736) }*+{\clubs}="b12"  
 !{(-2.671,12.677) }*+{\clubs}="b13"  !{(2.671,12.677) }*+{\bullet}="b14"
 !{(3.89,7.736) }*+{\clubs}="b15"
!{(8.559,-1.31) }*+{*}="b21"  !{(5.888,6.458) }*+{*}="b23"  
 !{(11.23,6.458) }*+{*}="b24" !{(12.449,1.517) }*+{\circ}="b25"
 !{(5.877,-12.181) }*+{\bullet}="b31"  !{(1.987,-9.354) }*+{\clubs}="b32"
 !{(8.548,-4.413) }*+{\clubs}="b34"  !{(9.767,-9.354) }*+{\clubs}="b35"
!{(-5.877,-12.181) }*+{\clubs}="b41" 
!{(-9.767,-9.354) }*+{\bullet}="b42"  !{(-8.548,-4.413) }*+{\clubs}="b43"
 !{(-1.987,-9.354) }*+{\clubs}="b45"  
!{(-8.559,-1.31) }*+{*}="b51" 
!{(-12.449,1.517) }*+{*}="b52"  !{(-11.23,6.458) }*+{\circ}="b53"
 !{(-5.888,6.458) }*+{*}="b54"  
!{(-12.449,3,376) }*+{*}="c521"  
 !{(-11.235,-0.155) }*+{*}="c523"  !{(-13.664,-0.155) }*+{*}="c524"
 !{(-14.218,2.091) }*+{*}="c525"
!{(12.449,3,376) }*+{\circ}="c251"  
 !{(11.235,-0.155) }*+{\circ}="c254"  !{(13.664,-0.155) }*+{\circ}="c253"
 !{(14.218,2.091) }*+{\circ}="c252"
!{(-9.767,-7.495) }*+{\bullet}="c421"  
 !{(-8.553,-11.026) }*+{\bullet}="c423"  !{(-10.982,-11.026) }*+{\bullet}="c424"
 !{(-11.536,-8.780) }*+{\bullet}="c425"
!{(9.767,-7.495) }*+{\clubs}="c351"  
 !{(8.553,-11.026) }*+{\clubs}="c354"  !{(10.982,-11.026) }*+{\clubs}="c353"
 !{(11.536,-8.780) }*+{\clubs}="c352"
!{(4.108,-11.607) }*+{\bullet}="c315"  
 !{(4.663,-13.853) }*+{\bullet}="c314"  !{(7.092,-13.833) }*+{\bullet}="c313"
 !{(7.646,-11.607) }*+{\bullet}="c312"
!{(-4.108,-11.607) }*+{\clubs}="c412"  
 !{(-4.663,-13.853) }*+{\clubs}="c413"  !{(-7.092,-13.833) }*+{\clubs}="c414"
 !{(-7.646,-11.607) }*+{\clubs}="c415"
!{(-2.671,14.537) }*+{\clubs}="c131"  
 !{(-0.903,13.252) }*+{\clubs}="c132"  !{(-3.886,11.006) }*+{\clubs}="c134"
 !{(-4.44,13.252) }*+{\clubs}="c135"
!{(2.671,14.537) }*+{\bullet}="c141"  
 !{(4.44,13.252) }*+{\bullet}="c142"  !{(3.886,11.006) }*+{\bullet}="c143"
 !{(0.903,13.252) }*+{\bullet}="c145"
!{(11.23,8.318) }*+{*}="c241"  
 !{(12.999,7.033) }*+{*}="c242"  !{(12.445,4.787) }*+{*}="c243"
 !{(9.462,7.033) }*+{*}="c245"
!{(-11.23,8.318) }*+{\circ}="c531"  
 !{(-9.462,7.033) }*+{\circ}="c532"  !{(-12.445,4.787) }*+{\circ}="c534"
 !{(-12.999,7.033) }*+{\circ}="c535"
"O"-@{=}"a1" "O"-@{=}"a2" "O"-@{=}"a3" "O"-@{=}"a4" "O"-@{=}"a5"
 "a1"-@{~}"b12" "a1"-@{~}"b13" "a1"-@{=}"b14" "a1"-@{~}"b15"
"a2"-@{.}"b21"  "a2"-@{.}"b23" "a2"-@{.}"b24" "a2"-"b25"
"a3"-@{=}"b31"  "a3"-@{~}"b32" "a3"-@{~}"b34" "a3"-@{~}"b35"
"a4"-@{~}"b41"  "a4"-@{=}"b42" "a4"-@{~}"b43" "a4"-@{~}"b45"
"a5"-@{.}"b51"  "a5"-@{.}"b52" "a5"-"b53" "a5"-@{.}"b54"
"b52"-@{.}"c521"  "b52"-@{.}"c523"  "b52"-@{.}"c524"  "b52"-@{.}"c525"
"b25"-"c251"  "b25"-"c253"  "b25"-"c254"  "b25"-"c252"
"b35"-@{~}"c351"  "b35"-@{~}"c353"  "b35"-@{~}"c354"  "b35"-@{~}"c352"
"b14"-@{=}"c141"  "b14"-@{=}"c142"  "b14"-@{=}"c143"  "b14"-@{=}"c145"  
"b42"-@{=}"c421"  "b42"-@{=}"c423"  "b42"-@{=}"c424"  "b42"-@{=}"c425"  
"b31"-@{=}"c312"  "b31"-@{=}"c313"  "b31"-@{=}"c314"  "b31"-@{=}"c315"  
"b13"-@{~}"c131"  "b13"-@{~}"c132"  "b13"-@{~}"c134"  "b13"-@{~}"c135"
"b41"-@{~}"c413"  "b41"-@{~}"c412"  "b41"-@{~}"c414"  "b41"-@{~}"c415"  
"b53"-"c531"  "b53"-"c532"  "b53"-"c534"  "b53"-"c535"  
"b24"-@{.}"c241"  "b24"-@{.}"c243"  "b24"-@{.}"c242"  "b24"-@{.}"c245"
}} 
\]
\caption{ Bullets and double lines denote part of the branch $\mathfrak{S}(\oink_B)\subseteq\mathfrak{T}(B)$ 
when $B$ is the quaternion division algebra over $\mathbb{Q}_2$. Here $C=Z(B)\cong\mathbb{Q}_2$, 
while $e=f=n$. We show $\mathfrak{S}(\oink_B)$ as the intersection of $\mathfrak{S}(\oink_L)$, where
$L/K$ is an unramified
 quadratic extension (denoted by white circles and single lines), and the branch $\mathfrak{S}(\pi_B)$,
of a uniformizing parameter (denoted by clubs and wiggly lines). The clubs $A,\dots,F$ have
valency one in the latter branch (c.f. \S3).}
\end{figure}

In all that follows, by an end $e$ of a graph $\mathfrak{G}$ we mean an equivalence class of rays, 
where two rays $\gamma,\gamma':\mathfrak{N}\rightarrow \mathfrak{G}$ are equivalent if there
is a fix integer $t$ such that $\gamma'(n)=\gamma(n+t)$ for every sufficiently large positive
integer $n$. Ends of sub-graphs are naturally identified with ends of the larger graph,
and notations like $e\in \mathfrak{G}$ should be understand in this sense,
 if $e$ is an end and $\mathfrak{G}$
is a subgraph of the Bruhat-Tits tree. We also use $\mathfrak{S}(a)$ instead of
$\mathfrak{S}\big(\oink_K[a]\big)$ for the branch of the order generated by a single
element $a\in B$.

\section{Local geometry of branches}

Let $B$, $K$, and $\oink_K$ be as in \S1, and let $\oink_B$ be the ring of integers of $B$. Let $\mathfrak{m}_B=\pi_B\oink_B$ be the unique maximal bilateral ideal of  $\oink_B$,
 and let $\mathfrak{m}_K=\pi_K\oink_K=
\mathfrak{m}_B\cap\oink_K$ be the maximal ideal of $\oink_K$.
 Let $\mathbb{B}=\oink_B/\mathfrak{m}_B\cong\mathbb{F}_{q^n}$ be the residue field of $B$, while
$\mathbb{K}=\oink_K/\mathfrak{m}_K\cong\mathbb{F}_{q}$ is the residue field of $K$.
 Note that $q$ is a prime power.
If $\rho$ denotes the absolute value in $B$, then $\oink_B=\{x\in B | \rho(x)\leq1\}$ and 
$\mathfrak{m}_B=\{x\in B | \rho(x)<1\}$.
 Recall that the column 
space $B^2=\textnormal{\scriptsize$\left(\begin{array}{c}B\\ B\end{array}\right)$\normalsize}$
 has a natural bi-module structure with matrices acting on it by left multiplication while scalars act on the right. This is the module structure considered throughout this paper. In particular, by an $\oink_B$-lattice in the
space $B^2$
we mean a lattice that is closed under right-multiplication by scalars in $\oink_B$ but not necessarily under left multiplication by scalar matrices. Let $\Da_0\subseteq \matrici_2(B)$ be a maximal order.
 Recall that 
$\Da_0=\mathrm{End}_{\oink_B}(\Lambda_0)$ for some $\oink_B$-lattice $\Lambda_0$. By a change of basis,
 we can assume that $\Lambda_0=\textnormal{\scriptsize$\left(\begin{array}{c}\oink_B\\ \oink_B\end{array}\right)$\normalsize}=\oink_B^2$, so that
$\Da_0=\matrici_2(\oink_B)=\lbmatrix {\oink_B}{\oink_B}{\oink_B}{\oink_B}$. Then any neighbor of
$\Da_0$ is a maximal order of the form $$\Da_{\stackrel{\rightarrow}v}=
\mathrm{End}_{\oink_B}[\Lambda_0\pi_B +\stackrel{\rightarrow}v\oink_B],\qquad \stackrel{\rightarrow}v\in
 \Lambda_0\backslash \Lambda_0\pi_B.$$
Note that $\oink_B\pi_B=\pi_B\oink_B$, whence $\Lambda_0\pi_B$ is indeed an $\oink_B$-lattice. Furthermore
$\Da_{\stackrel{\rightarrow}v}$ depends only on the one dimensional vector space spanned
by the image of $\stackrel{\rightarrow}v$ on the $\mathbb{B}$-vector space
$\Lambda_0/ \Lambda_0\pi_B\cong\mathbb{B}^2$. This space determines a unique element 
$a$ in the projective line $\mathbb{P}^1(\mathbb{B})$. For an arbitrary order $\Ha$, the vertex $\Da_0$ belongs to $S(\Ha)$, by definition, if and only if $\Ha\subseteq\Da_0$. If we assume $\Da_0$ is as above, the neighbors of
$\Da_0$ in the branch $\mathfrak{S}(\Ha)$ correspond to the  one dimensional $\mathbb{H}$-submodules of
$\mathbb{B}^2$, where 
$$\mathbb{H}=(\Ha+\pi_B\Da_0)/\pi_B\Da_0\subseteq\Da_0/\pi_B\Da_0=\matrici_2(\mathbb{B}),$$
 and therefore its number is $0$, $1$, $2$, or $q^n+1$.
 We classify the vertices in $\mathfrak{S}(\Ha)$ in isolated points, leaves,
bridges, and nodes, according to their respective number of neighbors,  as in Fig. 3. Note that the valency of
a node is always $q^n+1$.

\begin{figure}
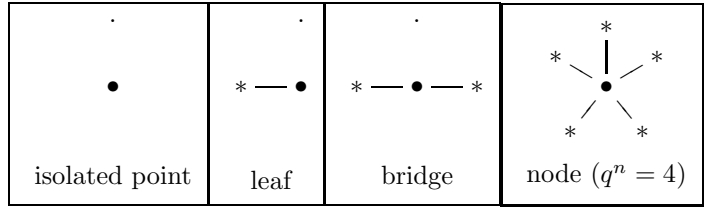

\[
\fbox{ \xygraph{
!{<0cm,0cm>;<.8cm,0cm>:<0cm,.8cm>::} !{(1,1) }*+{\bullet}="a" !{(1,2.1) }*+{.}="f"
!{(1,-.5)}*+{\textnormal{isolated point}}="e"   }} 
\fbox{ \xygraph{
!{<0cm,0cm>;<.8cm,0cm>:<0cm,.8cm>::} !{(1,1) }*+{\bullet}="a" !{(1,2.1) }*+{.}="f"
!{(0,1)}*+{*}="b"
!{(.5,-.54)}*+{\textnormal{leaf}}="e"  "b"-"a" }} 
\fbox{ \xygraph{
!{<0cm,0cm>;<.8cm,0cm>:<0cm,.8cm>::} !{(1,1) }*+{\bullet}="a" !{(1,2.1) }*+{.}="f"
!{(0,1)}*+{*}="b" !{(2,1)}*+{*}="c" 
!{(1,-.5)}*+{\textnormal{bridge}}="e"  "b"-"a" "a"-"c" }} 
 \fbox{ \xygraph{
!{<0cm,0cm>;<.8cm,0cm>:<0cm,.8cm>::} !{(1,1) }*+{\bullet}="a" 
!{(1.6,0.25)}*+{*}="b" !{(0.16,1.5)}*+{*}="c" !{(1.84,1.5) }*+{*}="d" !{(1,2) }*+{*}="f" !{(0.4,0.25) }*+{*}="g"
!{(1,-.48)}*+{\textnormal{node ($q^n=4$)}}="e"  "b"-"a" "a"-"c" "a"-"d" "a"-"f" "a"-"g"}} 
\]
\caption{Types of vertices in a branch.}
\end{figure}

We know that the branch $\mathfrak{S}(\Ha)$ is connected since, 
for any two maximal orders, $\Da_1$ and $\Da_2$,
 there is a choice of basis in which they have the form 
\begin{equation}\label{ddd}
\Da_1=\bbmatrix {\oink_B}{\oink_B}{\oink_B}{\oink_B},\qquad \Da_2=
\bbmatrix {\oink_B}{\pi_B^d\oink_B}{\pi_B^{-d}\oink_B}{\oink_B},
\end{equation}
for a unique possitive integer $d$, the distance between the orders,
 and therefore their intersection is contained in every maximal order in the path joining them.
We conclude that isolated points can only be present if $\Ha$ is contained in a unique maximal order.
This is the case if and only if the natural inclusion $\mathbb{H}\hookrightarrow\matrici_2(\mathbb{B})$ is an irreducible
representation.

The BT-tree is canonically isomorphic to the ball-tree of $B$, i.e., the tree whose vertices corresponds to
the closed balls on $B$, and where two balls are neighbors if and only if one is a maximal proper sub-ball of the other. The
maximal order corresponding to the ball $U$ with center $a$ and radius $\rho(c)$, for $a,c\in B$, is the endomorphism ring
$\Da_U$ of the lattice generated by the column vectors $\left(\begin{array}{c}c\\0\end{array}\right)$ and $\left(\begin{array}{c}a\\1\end{array}\right)$. 
Note That if $U=U_0$ is the radius $1$ ball centered at $0$, then $\Da_U=\matrici_2(\oink_B)$. 
Under this correspondence, we have
\begin{equation}\label{mob}
\Da_{U'}=\left(\begin{array}{cc}x&y\\0&z\end{array}\right)\Da_U\left(\begin{array}{cc}x&y\\0&z\end{array}\right)^{-1},\textnormal{ where }U'=(xU+y)z^{-1}.
\end{equation}
In what follows we denote by $\rho(U)=\rho(c)$ the radius of any ball $U=cU_0+a$.
We also recall that the set of ends in the BT-tree is in correspondence with
the projective line $\mathbb{P}^1(B)=B\cup\{\infty\}$. This identification is compatible
with the identification bwtween the Bruhat-Tits tree and the ball tree, in the sense that one representative of
the end corresponding to an element $b\in B$ is a ray whose
vertices are balls of decreasing radii having intersection $\{b\}$,
while $\infty$ is the end corresponding to any ray whose vertices are balls of increasing radii.  
The proof of all these facts is entirely analogous to the one given in \cite{AAC} for the field case.

As usual, for any division algebra $L\subseteq B$ 
we define the residual degree $f(B/L)$ as the degree 
$[\mathbb{B}:\mathbb{L}]$ of the corresponding extension 
of residual fields, and the ramification degree by $e(B/L)=
\log_{\rho(\pi_B)}\rho(\pi_L)$, where $\pi_B$ and $\pi_L$ are the corresponding uniformizing parameters.
Next result is fundamental in our proof of Theorem \ref{tha}:

\begin{lemma}\label{l31}
Let $B$ be a central division algebra over the local field $K$.
If $L$ is a $K$-subalgebra of $B$ and $C$ is the centralizer of $L$ in $B$, then the following identities hold:
$$ e(L/K)=f(B/C),\qquad f(L/K)=e(B/C).$$ 
\end{lemma}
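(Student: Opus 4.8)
The plan is to reduce both identities to a single numerical relation produced by the double centralizer theorem, once we record how $e$ and $f$ behave for central division algebras over local fields and in towers. First I would observe that, being a finite-dimensional $K$-subalgebra of the division algebra $B$, the algebra $L$ is itself a division algebra, hence simple; the double centralizer theorem, applied to the central simple $K$-algebra $B$ and its simple subalgebra $L$, then gives that $C=C_B(L)$ is simple (so also a division algebra), that $C_B(C)=L$, that $L$ and $C$ share a common center $Z:=Z(L)=Z(C)$, and crucially that $\dim_K L\cdot\dim_K C=\dim_K B=n^2$. Writing $\dim_Z L=s^2$ and $\dim_Z C=t^2$, so that $L$ and $C$ are central division $Z$-algebras of degrees $s$ and $t$, this dimension identity reads $s^2t^2[Z:K]^2=n^2$, that is, $st\,[Z:K]=n$.

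The second ingredient I would use is the standard structure of local division algebras: a central division algebra $D$ of degree $d$ over a local field $F$ satisfies $e(D/F)=f(D/F)=d$, since its residue field is the unramified extension of degree $d$ and its value group has index $d$. Applying this to $L/Z$, $C/Z$, and $B/K$ yields $e(L/Z)=f(L/Z)=s$, $e(C/Z)=f(C/Z)=t$, and $e(B/K)=f(B/K)=n$; moreover, since $Z/K$ is an extension of local fields with (perfect) finite residue field, it is defectless and $e(Z/K)f(Z/K)=[Z:K]$. Both $e$ and $f$ are multiplicative in towers of division subalgebras, because value-group indices multiply and residue-field degrees multiply; thus $e(L/K)=s\,e(Z/K)$, $f(C/K)=t\,f(Z/K)$, $f(B/K)=f(B/C)f(C/K)$, together with the analogous formulas obtained by exchanging the roles of $e$ and $f$.

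Combining these, I would compute $e(L/K)\,f(C/K)=st\,e(Z/K)f(Z/K)=st\,[Z:K]=n=f(B/K)=f(B/C)f(C/K)$, and cancelling the common factor $f(C/K)$ gives $e(L/K)=f(B/C)$. The symmetric computation $f(L/K)\,e(C/K)=st\,[Z:K]=n=e(B/K)=e(B/C)e(C/K)$ then yields $f(L/K)=e(B/C)$, establishing both identities at once.

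The tower bookkeeping is entirely routine; the genuine content sits in the two imported facts, so I expect the only real obstacle to be invoking them in precisely the right form. Specifically, I must use the double centralizer theorem in the version guaranteeing $Z(C)=Z(L)$ \emph{and} the multiplicative dimension formula $\dim_K L\cdot\dim_K C=\dim_K B$, and the local structure theorem $e(D/F)=f(D/F)=\deg D$ for central division algebras. Once both are in place, the whole statement collapses to the single equality $st[Z:K]=n$.
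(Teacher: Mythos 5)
Your proof is correct, but it reaches the two identities by a genuinely different route than the paper. The paper's argument is a Frobenius computation inside $B$: conjugation by $\pi_B$ induces the generator $\sigma$ of $\mathrm{Gal}(\mathbb{B}/\mathbb{K})$, and since $\pi_C$ commutes with every $\lambda\in\oink_L$, the residue field $\mathbb{L}$ is fixed by $\sigma^{e(B/C)}$, giving the divisibility $f(L/K)\mid e(B/C)$; the symmetric statement via $C_B(C)=L$ gives $e(L/K)\mid f(B/C)$, and equality is then forced by the same dimension count $e(L/K)f(L/K)=\dim_KL=\dim_KB/\dim_KC=e(B/C)f(B/C)$ that closes your argument. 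You replace the Frobenius step entirely by invoking the double centralizer theorem in full strength (in particular $Z(L)=Z(C)=Z$ together with $\dim_KL\cdot\dim_KC=\dim_KB$) and the structure theorem $e(D/F)=f(D/F)=\deg_FD$ for central division algebras over local fields, after which both identities collapse to the bookkeeping identity $st[Z:K]=n$ and two cancellations. What your version buys is a purely numerical proof with no computation of conjugates in $B$; what the paper's version buys is that the relation $\sigma(\bar b)=\overline{\pi_Bb\pi_B^{-1}}$ it isolates here is reused immediately afterwards in the proof of Theorem \ref{tha} (to distinguish eigenvalues at bridges and to bound the radii of roses), so the conjugation argument is not wasted effort in context. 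One small merit of your write-up: you justify that $L$ is simple by noting that any finite-dimensional $K$-subalgebra of a division algebra is itself a division algebra, a hypothesis the lemma's statement does not make explicit and the paper leaves implicit.
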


\begin{proof}
Recall that the residual Galois group $\mathrm{Gal}(\mathbb{B}/\mathbb{K})$ is generated by an element
$\sigma$ satisfying $\sigma(\bar{b})=\overline{\pi_Bb\pi_B^{-1}}$ for any $b\in\oink_B$.
For every element $\lambda\in \oink_L$, its image $\bar{\lambda}$ in the residue field 
$\mathbb{L}=\oink_L/\pi_L\oink_L$ satisfies $\sigma^{e(B/C)}(\bar{\lambda})=\overline{\pi_C\lambda\pi_C^{-1}}$. 
Since $\pi_C$ and $\lambda$ commute,
we conclude that $f(L/K)$ divides $e(B/C)$. 
Since $L$ is the centralizer of $C$, we conclude that $f(C/K)$ divides $e(B/L)$, and therefore 
 $e(L/K)$ divides $f(B/C)$. Equality follows since
$$f(L/K)e(L/K)=\dim_KL=\frac{ \dim_KB}{\dim_KC}=e(B/C)f(B/C).$$ 
\end{proof}

In what follows, by a unit in $\matrici_2(B)$ we mean an element $\lambda\in \matrici_2(B)$ satisfying 
 an equation of the  form $x^n+a_{n-1}x^{n-1}+\dots+a_1x+a_0=0$ where each $a_i\in\oink_K$ is 
integral, while $a_0\in\oink_K^*$ is a unit. Note that a unit  is invertible in any $\oink_K$-order containing 
it. An element $\lambda\in B$ is a unit if and only if $\rho(\lambda)=1$.
If $\lambda\in\oink_B$ is not a unit, then $\rho(\lambda)<1$, so that  $1+\lambda$ is a unit.
 The same applies to any other division algebra. 
Certainly, for any order $\Da$, $\lambda\in\Da$ if and only if $1+\lambda\in\Da$,  
so by replacing $\lambda$ by $1+\lambda$, many computations can be reduced to units.

\begin{lemma}
Let $B$ be a central division algebra over the local field $K$.
Let $\Da$ be a maximal order in $\matrici_2(B)$. Then any unit in 
$\matrici_2(B)$ normalizing $\Da$ belongs to $\Da^*$. 
In particular, if $\lambda$ is a unit, then $\lambda\in\Da$ is equivalent to 
$\lambda\Da\lambda^{-1}=\Da$.

\end{lemma}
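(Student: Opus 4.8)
The plan is to pin down the normalizer of $\Da$ inside $GL_2(B)=\matrici_2(B)^*$ and then to isolate the subgroup $\Da^*$ by means of the reduced norm $\mathrm{Nrd}$ of the central simple $K$-algebra $\matrici_2(B)$ (a multiplicative map onto $K$); throughout $v_K$ denotes the normalized valuation of $K$. The \emph{in particular} clause reduces to the displayed assertion: if $\lambda\in\Da$ is a unit it is invertible inside $\Da$, so $\lambda\Da\lambda^{-1}=\Da$ automatically, and the reverse implication is exactly the claim that a normalizing unit lies in $\Da^*$. Since every maximal order is conjugate to $\Da_0=\matrici_2(\oink_B)=\mathrm{End}_{\oink_B}(\Lambda_0)$ with $\Lambda_0=\oink_B^2$, and both hypothesis and conclusion are stable under conjugation ($\mathrm{Nrd}$ being conjugation-invariant), I may assume $\Da=\Da_0$.

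The main step is to describe $N(\Da_0)$. If $\lambda$ normalizes $\Da_0$ then $\mathrm{End}_{\oink_B}(\lambda\Lambda_0)=\lambda\Da_0\lambda^{-1}=\Da_0=\mathrm{End}_{\oink_B}(\Lambda_0)$, so the full lattices $\lambda\Lambda_0$ and $\Lambda_0$ have the same endomorphism ring, and I would show that this forces them to be right-homothetic. By the elementary-divisor theorem over the non-commutative discrete valuation ring $\oink_B$ there is a basis $e_1,e_2$ of $B^2$ with $\Lambda_0=e_1\oink_B\oplus e_2\oink_B$ and $\lambda\Lambda_0=e_1\pi_B^a\oink_B\oplus e_2\pi_B^b\oink_B$, $a\le b$; computing in this basis gives $\mathrm{End}_{\oink_B}(\Lambda_0)=\matrici_2(\oink_B)$ and $\mathrm{End}_{\oink_B}(\lambda\Lambda_0)=\lbmatrix{\oink_B}{\pi_B^{a-b}\oink_B}{\pi_B^{b-a}\oink_B}{\oink_B}$, which coincide exactly when $a=b$. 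Hence $\lambda\Lambda_0=\Lambda_0\pi_B^{\,k}=(\pi_B I)^k\Lambda_0$ for some $k\in\enteri$, using $\oink_B\pi_B=\pi_B\oink_B$. Writing $\Pi=\pi_B I$, the matrix $\Pi^{-k}\lambda$ carries $\Lambda_0$ onto itself and so lies in $\Da_0^*$; thus $\lambda=\Pi^k u$ with $u\in\Da_0^*$, i.e. $N(\Da_0)=\langle\Pi\rangle\,\Da_0^*$.

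Finally I would force $k=0$. Multiplicativity gives $\mathrm{Nrd}(\lambda)=\mathrm{Nrd}(\Pi)^k\,\mathrm{Nrd}(u)$. Since $u$ and $u^{-1}$ lie in the order $\Da_0$ they are integral over $\oink_K$, whence $\mathrm{Nrd}(u)\in\oink_K^*$; and as $\Pi=\mathrm{diag}(\pi_B,\pi_B)$ is block-diagonal, $\mathrm{Nrd}(\Pi)=\mathrm{Nrd}_B(\pi_B)^2$, of valuation $2$ because $v_K(\mathrm{Nrd}_B(\pi_B))=1$ for a uniformizer of $B$. On the other hand $\lambda$ is a unit, so $\lambda^{-1}\in\oink_K[\lambda]$ is integral and $\mathrm{Nrd}(\lambda)\in\oink_K^*$. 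Comparing valuations yields $0=v_K(\mathrm{Nrd}(\lambda))=2k$, so $k=0$ and $\lambda=u\in\Da_0^*=\Da^*$, as desired.

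The hard part is the lattice step: establishing that equality of endomorphism rings for two full lattices in $B^2$ implies right-homotheticity, which rests on the elementary-divisor theory for modules over the non-commutative principal ideal domain $\oink_B$ rather than its familiar commutative analogue. Once the normalizer description $N(\Da_0)=\langle\Pi\rangle\,\Da_0^*$ is secured, the reduced-norm bookkeeping—most notably $v_K(\mathrm{Nrd}_B(\pi_B))=1$—is routine.
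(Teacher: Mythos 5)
Your argument is correct and follows essentially the same route as the paper's: reduce to $\Da_0=\matrici_2(\oink_B)$, observe that a normalizing element carries $\Lambda_0$ to a right-homothetic lattice, and rule out a nontrivial homothety for a unit by a norm computation. The paper phrases the last step via Haar measure and the scaling identity $\mu(\lambda A)=\rho\big(n(\lambda)\big)^t\mu(A)$ rather than your explicit reduced-norm valuation count, but the two are equivalent.
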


\begin{proof}
Replacing by conjugates if needed, we can assume that $\Da=\matrici_2(\oink_B)$. Then any element
$\lambda\in \matrici_2(B)$ satisfying $\lambda\Da \lambda^{-1}=\Da$ must also satisfy 
$\lambda\left(\begin{array}{c}\oink_B\\ \oink_B\end{array}\right)=
\left(\begin{array}{c}\oink_B\\ \oink_B\end{array}\right)u$ for some scalar $u\in B$.
Now let $\mu$ be the Haar measure on $k\times k$, and let $n:\matrici_2(B)\rightarrow K$ be
the reduced norm. It is well known\footnote{See for example Corollary 3 to Theorem 3 in \cite[\S1.2]{weil}.}
that $\mu(\lambda A)=\rho\Big(n(\lambda)\Big)^t\mu(A)$, for a fixed normalizing power $t$,
and any measurable set $A$. By applying the same result to $u\in B$, we must conclude that,
if $\lambda$ is a unit, then $u$ is a unit, and it can be assumed to be $1$. The result follows. 
\end{proof}

 Next result follows easily from the identity $f(L/K)=e(B/C)$, and the structure of the ball tree:

\begin{lemma}\label{l33}
Set $\Da_0=\matrici_2(\oink_B)$, where $B$ is a division $K$-algebra, and let $C$ be the centralizer of 
a sub-algebra $L\subseteq B$ as before. Let $\hat{\mathfrak{T}}(C)$
 be the minimal connected subgraph of $\mathfrak{T}(B)$ containing every end in $\mathbb{P}^1(C)\subseteq\mathbb{P}^1(B)$.
 then $\hat{\mathfrak{T}}(C)$ is isomorphic to $\mathfrak{T}(C)/f$,
where $f=f(L/K)$.
\end{lemma}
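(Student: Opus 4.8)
The plan is to pass to the ball-tree model of $\mathfrak{T}(B)$ recalled above and identify $\hat{\mathfrak{T}}(C)$ explicitly as a family of balls, after which the statement becomes a comparison of radii. The first reduction is to trade $f$ for $e(B/C)$: by Lemma \ref{l31} one has $f=f(L/K)=e(B/C)$, so $\rho(\pi_C)=\rho(\pi_B)^{f}$ and $\rho(C^\times)$ is precisely the subgroup of index $f$ in $\rho(B^\times)$ generated by $\rho(\pi_B)^{f}$. Throughout I would write $U(c,m)=\{x\in B:\rho(x-c)\le\rho(\pi_B)^m\}$ for the ball of $B$ with centre $c$ and radius $\rho(\pi_B)^m$; since $\rho$ takes only the values $\rho(\pi_B)^m$ on $B$, every vertex of $\mathfrak{T}(B)$ is such a ball, and the neighbours of $U(c,m)$ are its parent $U(c,m-1)$ together with its $q^n$ maximal sub-balls of radius $\rho(\pi_B)^{m+1}$.

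Next I would describe $\hat{\mathfrak{T}}(C)$ as a convex hull. In a tree the minimal connected subgraph containing a set of ends is the union of the lines joining two of them, so $\hat{\mathfrak{T}}(C)$ is the union of the lines joining two ends of $\mathbb{P}^1(C)$. Under the identification of ends with $\mathbb{P}^1(B)$, the line joining $\infty$ to a finite end $c\in C$ is exactly the chain $\{U(c,m):m\in\enteri\}$, while the line joining two finite ends $c_1,c_2\in C$ is a sub-collection of balls centred at $c_1$ or $c_2$. This gives the set-theoretic identity
$$V\big(\hat{\mathfrak{T}}(C)\big)=\{U\text{ a ball of }B:U\cap C\neq\emptyset\},$$
one inclusion being immediate (a ball meeting $C$ is centred at a point of $C$, hence lies on the line from $\infty$ to that point), and the other following because every ball on a line between two ends of $\mathbb{P}^1(C)$ is centred at a point of $C$.

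With this description the isomorphism with $\mathfrak{T}(C)/f$ comes from sorting the balls meeting $C$ by the residue of their radius exponent modulo $f$. I would send a vertex $V$ of $\mathfrak{T}(C)$, realised as the $C$-ball of radius $\rho(\pi_B)^{fj}$ about $c$, to the $B$-ball $U(c,fj)$, and the $i$-th subdivision point on the edge from $V$ to its parent to $U(c,fj-i)$ for $1\le i\le f-1$. The point is that $U(c,m)\cap C=U(c,f\lceil m/f\rceil)\cap C$ depends only on $\lceil m/f\rceil$, so $U(c,m)\cap C=V\cap C$ exactly for the $f$ consecutive balls $U(c,m)$ with $f(j-1)<m\le fj$, and these form a path of length $f$ joining the two $\mathfrak{T}(C)$-vertices $U(c,f(j-1))$ and $U(c,fj)$. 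Thus $U\mapsto U\cap C$ restricts to a bijection between the balls with $f\mid m$ and the vertices of $\mathfrak{T}(C)$, compatible with adjacency, realising $\hat{\mathfrak{T}}(C)$ as the $f$-th subdivision.

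The genuinely delicate point — the one I expect to be the main obstacle — is to verify that the intermediate balls, those with $m\not\equiv 0\pmod f$, really have valency $2$ in $\hat{\mathfrak{T}}(C)$, i.e. that only one of their $q^n$ children meets $C$. This is exactly where the index $f=e(B/C)$ of $\rho(C^\times)$ in $\rho(B^\times)$ enters: for such $m$ the set $U(c,m)\cap C$ is a $C$-ball of radius $\rho(\pi_B)^{f\lceil m/f\rceil}\le\rho(\pi_B)^{m+1}$, hence fits entirely inside a single child of radius $\rho(\pi_B)^{m+1}$, leaving the parent and that one child as the only neighbours meeting $C$. Dually, at a vertex with $f\mid m$ the maximal proper $C$-sub-balls of $U\cap C$ distribute one-to-one among the children meeting $C$, giving valency $q^{f(C/K)}+1$, which is the valency of a vertex of $\mathfrak{T}(C)$ and hence of a node of $\mathfrak{T}(C)/f$. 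Granting these two local computations, the remaining checks — that the displayed assignment is simplicial and injective — are routine bookkeeping with radii.
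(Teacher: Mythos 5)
Your argument is correct and is exactly the route the paper intends: the paper offers no written proof of this lemma beyond the remark that it ``follows easily from the identity $f(L/K)=e(B/C)$ and the structure of the ball tree,'' and your write-up supplies precisely those details --- identifying $\hat{\mathfrak{T}}(C)$ with the balls of $B$ meeting $C$ and using the index $f=e(B/C)$ of $\rho(C^{*})$ in $\rho(B^{*})$ to show that only every $f$-th ball along a chain centred in $C$ is a node. The two local valency computations you single out (one child meets $C$ when $f\nmid m$, and $q^{f(C/K)}$ children meet $C$ when $f\mid m$) are indeed the substance of the lemma, and they are carried out correctly.
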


\paragraph{Proof of  Theorem \ref{tha}.}

Set $\Da_0=\matrici_2(\oink_B)$, as above, and identify $B$ with the set of scalar matrices $\lbmatrix b00b$. Note that such matrices act by conjugation on the ball tree as in (\ref{mob}).  
  Since the elements of $\oink_L^*$ act trivially on $\mathbb{P}^1(C)$
by conjugation, the branch $\mathfrak{S}(\oink_L)$ contains the vertex $\Da_U$ for every  ball $U$ with
a center $c\in C$. We conclude that $\hat{\mathfrak{T}}(C)$, as defined above,
 is contained in $\mathfrak{S}(\oink_L)$.  Now let
$\Da$ be a bridge in $\hat{\mathfrak{T}}(C)$. We claim that $\Da$ is also a bridge in $\mathfrak{S}(\oink_L)$.
Note that $\matrici_2(C)$ is the centralizer of $\oink_L$, and it acts transitively
on pairs of neighboring vertices of $\mathfrak{T}(C)$, i.e., pairs of nodes of $\hat{\mathfrak{T}}(C)$ at distance $f$. 
We conclude that,  conjugating by an element of $\matrici_2(C)$  if needed, we can
 assume $\Da=\Da_{U_r}$ for some ball $U_r$  centered at $0$ and radius
$\rho(\pi_B)^r$, where $1>\rho(\pi_B)^r>\rho(\pi_C)$. This can happen only if $\pi_C$ is not a uniformizer in $B$,
which is precisely the condition for $f>1$, so we actually have bridges.
It suffices to prove that $\oink_L$ contains an element $\lambda$ whose image $\bar{\lambda}\in \Da/\pi_B\Da\cong
\matrici_2(\mathbb{B})$ has different eigenvalues. Note that, if $U_0$ is the ball of radius $1$ centered at 
$0$, we have $\Da=\Da_{U_r}=a\Da_{U_0}a^{-1}$, where $a=\lbmatrix {\pi_B^r}001$, whence
the eigenvalues of the image $\bar{\lambda}\in\Da/\pi_B\Da$ of an element $\lambda\in\oink_L$ are
$l$ and $\sigma^r(l)$, where $l$ is the image of $\lambda$ in the residue field $\mathbb{L}$ of $L$, and therefore
they can be different as soon as $r$ is not divisible by $f(L/K)=e(B/C)$, but this is a consequence of the
inequalities $1>\rho(\pi_B)^r>\rho(\pi_C)$.

Now we prove that $\mathfrak{S}(\oink_L)$  is obtained by attaching a finite rose
to each node of $\hat{\mathfrak{T}}(C)$. In fact, since $\mathfrak{S}(\oink_L)$ is a tree, for every vertex $\Da\in\mathfrak{S}(\oink_L)$  there must exist a
minimal path joining $\Da$ to a vertex $v$ in $\hat{\mathfrak{T}}(C)$, 
which is necessarily a node from the preceding argument. We can assume it to be $v=\Da_{U_0}$
by the transitivity of the action of $\matrici_2(C)$ on the nodes of $\hat{\mathfrak{T}}(C)$. It suffices to prove
that the number of vertices in $\mathfrak{S}(\oink_L)$, for which this "closest $\hat{\mathfrak{T}}(C)$-node" is $\Da_{U_0}$, is
finite. If this were not the case, there would be an infinite sequence of balls 
$U_1,U_2,\dots$ of decreasing radii, corresponding to such
vertices, and a sequence of elements $b_i\in U_i$ converging to an element $b\in\oink_B$.
Enlarging the balls, if needed, 
we can assume they all contain $b$.
To fix ideas, set  $U_i=b+c_iU$.
This implies that the lattice generated by each pair of column vectors \footnotesize
$\left\{\left(\begin{array}{c}c_i\\0\end{array}\right),\left(\begin{array}{c}b\\1\end{array}\right)\right\}$
\normalsize, for arbitrarily small values of  $\rho(c_i)$, is invariant by any
scalar matrix in $\oink_L$. In particular, the lattice generated by the column vector 
\footnotesize $\left(\begin{array}{c}b\\1\end{array}\right)$
\normalsize is invariant,  but this can only be the case for  $b\in C$
as shown by the computation
$$\bbmatrix a00a\left(\begin{array}{c}b\\1\end{array}\right)=
\left(\begin{array}{c}aba^{-1}\\1\end{array}\right)a.$$

It remains to prove the bound $e'=e(B/L)$ on the radius of the rose. As before,
 we take $\Da\in\mathfrak{S}(\oink_L)$, and assume that $\Da$ is in the rose attached to 
$\Da_{U_0}$, whence $\Da=\Da_U$ for a ball
$U\subseteq U_0$ not contained in $c+U_1$ for any $c\in C$. 
Let $\epsilon$ be a center of this ball. Then we have
$$(1+\pi_L)\epsilon(1+\pi_L)^{-1}\equiv  \epsilon+\pi_L\big(\epsilon-\sigma^{e(B/L)}(\epsilon)\big) 
\quad (\mathop{\mathrm{mod}}\pi_L^2),$$
where $\sigma$ is as in the proof of Lemma \ref{l31}.
By Lemma \ref{l33}, we have $e(B/L)=f(C/K)=[\mathbb{C}:\mathbb{K}]$, where $\mathbb{C}$ is the residue field of $C$.
Hence, the condition that the residue
$\bar{\epsilon}$ is not in $\mathbb{C}$ implies
$$\rho\big((1+\pi_L)\epsilon(1+\pi_L)^{-1}-\epsilon\big)=\rho(\pi_L).$$ 
It follows that the radius of the ball $U$ cannot be smaller than
$\rho(\pi_L)$.
\qed

\section{Proof of Theorem \ref{thb} and Theorem \ref{thc}.}\label{sec3}

\begin{lemma}\label{blemma}
If $\mathfrak{B}$ is the ball of radius t in $\mathfrak{T}(B)$ centered at a maximal order $\Da$, then 
$\mathfrak{B}$ is admissible and the corresponding intersection of maximal orders is 
$$\Ha_{\mathfrak{B}}=\Da^{[t]}:=\{x\in \Da|\bar{x}\in Z(\Da/\mathcal{M}_\Da^t)\},$$ 
where $\mathcal{M}_\Da$ is the maximal bilateral ideal of $\Da$, and $\bar{x}$ denotes the image of 
$x$ in the quotient
$\Da/\mathcal{M}_\Da^t$.
\end{lemma}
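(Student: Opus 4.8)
The plan is to reduce to the standard order, make both $\Da^{[t]}$ and $\Ha_{\mathfrak{B}}$ completely explicit, and then establish the equality by two inclusions, deducing admissibility at the end. Since the ball $\mathfrak{B}$, the order $\Da^{[t]}$, and the bijection between admissible branches and intersections are all equivariant under conjugation (which acts transitively on vertices), I would first assume $\Da=\Da_0=\matrici_2(\oink_B)$. Then $\mathcal{M}_\Da=\matrici_2(\pi_B\oink_B)=\pi_B\Da_0$ is the radical, $\mathcal{M}_\Da^t=\matrici_2(\pi_B^t\oink_B)$, and $\Da_0/\mathcal{M}_\Da^t\cong\matrici_2(\oink_B/\pi_B^t\oink_B)$, whose center is the set of scalar matrices with entry in $Z(\oink_B/\pi_B^t\oink_B)$. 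Setting $Z_t=\{a\in\oink_B:[a,\oink_B]\subseteq\pi_B^t\oink_B\}$ (the preimage of that center), a direct entrywise check gives $\Da^{[t]}=Z_t\,I+\mathcal{M}_\Da^t$. The computational backbone is the remark, immediate from the normal form (\ref{ddd}), that $\mathcal{M}_\Da^t\subseteq\Da'$ if and only if $\mathrm{dist}(\Da_0,\Da')\le t$: in a basis adapted to the pair, the only binding condition is $\pi_B^t\oink_B\subseteq\pi_B^{d}\oink_B$ in the upper-right slot, i.e.\ $t\ge d$.

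For the inclusion $\Da^{[t]}\subseteq\Ha_{\mathfrak{B}}$ I would write each $x\in\Da^{[t]}$ as $x=aI+y$ with $a\in Z_t$ and $y\in\mathcal{M}_\Da^t$. The term $y$ lies in every order of the ball by the distance remark above, so it remains to show the scalar $aI$ lies in every maximal order $\Da'$ within distance $t$. Using (\ref{mob}) I would represent such an order as $\Da_V=g\Da_0 g^{-1}$ with $g=\bbmatrix{\pi_B^m}{\gamma}01$ for $V=\gamma+\pi_B^m U_0$; then $g^{-1}(aI)g$ is upper triangular with diagonal entries conjugate to $a$, and its only nontrivial condition is that its $(1,2)$ entry $\pi_B^{-m}[a,\gamma]$ lie in $\oink_B$, i.e.\ $[a,\gamma]\in\pi_B^m\oink_B$. \textbf{This is the step I expect to be the main obstacle}, because it must hold uniformly over the downward, upward, and lateral balls of the ball $\mathfrak{B}$, and the hypothesis $[a,\oink_B]\subseteq\pi_B^t\oink_B$ alone is not obviously enough when $v(\gamma)<0$. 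The key auxiliary estimate is that membership in $Z_t$ forces $a$ to commute with $\pi_B$ modulo $\pi_B^t$, hence $\pi_B^{-1}a\pi_B-a\in\pi_B^{t-1}\oink_B$ and, iterating with the residual automorphism $\sigma$ of Lemma \ref{l31}, $\pi_B^{-k}a\pi_B^k-a\in\pi_B^{t-1}\oink_B$ for all $k\in\enteri$. Combining this with $[a,u]\in\pi_B^t\oink_B$ for units $u$ and the distance identity $\mathrm{dist}(\Da_0,\Da_V)=m-2\min\{m,0,v(\gamma)\}\le t$, a short case analysis on which term realises the minimum yields $[a,\gamma]\in\pi_B^m\oink_B$ in every case, so $aI\in\Da_V$.

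For the reverse inclusion $\Ha_{\mathfrak{B}}\subseteq\Da^{[t]}$ I would intersect only finitely many conveniently chosen vertices of $\mathfrak{B}$. Any $x\in\Ha_{\mathfrak{B}}$ lies in $\Da_0$, so has entries in $\oink_B$; membership in $\Da_{\Lambda_t}$ and $\Da_{\Lambda_{-t}}$ forces the off-diagonal entries into $\pi_B^t\oink_B$, and membership in the $q^{nt}$ downward balls $\gamma+\pi_B^tU_0$, $\gamma\in\oink_B/\pi_B^t\oink_B$, forces $a\gamma\equiv\gamma d\ (\mathrm{mod}\ \pi_B^t\oink_B)$ for all such $\gamma$; taking $\gamma=1$ gives $a\equiv d$, and the general case then reads $[a,\gamma]\in\pi_B^t\oink_B$, i.e.\ $\bar a\in Z(\oink_B/\pi_B^t\oink_B)$. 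This places $x$ in $\Da^{[t]}$, proving equality, and since only finitely many maximal orders were used it also shows $\Da^{[t]}$ is a finite intersection of maximal orders. Finally, for admissibility I note $V(\mathfrak{B})\subseteq S(\Da^{[t]})$ is automatic, while if $\mathrm{dist}(\Da_0,\Da')>t$ the distance remark gives $\mathcal{M}_\Da^t\not\subseteq\Da'$, so some $y\in\mathcal{M}_\Da^t\subseteq\Da^{[t]}$ fails to lie in $\Da'$; hence $S(\Da^{[t]})=V(\mathfrak{B})$, which is exactly the statement that $\mathfrak{S}(\Da^{[t]})=\mathfrak{B}$ and $\Ha_{\mathfrak{B}}=\Da^{[t]}$.
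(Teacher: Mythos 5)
Your proof is correct, and its skeleton --- normalize to $\Da_0=\matrici_2(\oink_B)$, establish the two inclusions, then deduce admissibility from $\mathcal{M}_{\Da}^t\subseteq\Da^{[t]}$ together with the failure of $\mathcal{M}_{\Da}^t\subseteq\Da'$ beyond distance $t$ --- is the same as the paper's. Your reverse inclusion is the paper's computation in different clothing: intersecting with the orders attached to the balls $\gamma+\pi_B^tU_0$ amounts to conjugating by the unipotent units $\lbmatrix 10\gamma1$, and both arguments extract first $b,c\in\pi_B^t\oink_B$ and then $\bar a=\bar d$ central. The genuine divergence is in the inclusion $\Da^{[t]}\subseteq\Ha_{\mathfrak{B}}$, exactly the step you flag as the main obstacle. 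The paper disposes of it in one line: for each maximal order $\Da'$ at distance $d\le t$ it passes to a basis adapted to the \emph{pair} $(\Da_0,\Da')$ as in (\ref{ddd}); since $\Da^{[t]}$ and $\mathcal{M}_{\Da}^t$ are defined intrinsically from $\Da_0$, their coordinate description is unchanged in the new basis, and the containment $Z_tI+\matrici_2(\pi_B^t\oink_B)\subseteq\lbmatrix{\oink_B}{\pi_B^d\oink_B}{\pi_B^{-d}\oink_B}{\oink_B}$ is visible entrywise. By insisting on a single global coordinate system you must instead control $[a,\gamma]$ for centers $\gamma$ of negative valuation, which is why you need the estimate $\pi_B^{-k}a\pi_B^k-a\in\pi_B^{t-1}\oink_B$; that estimate is correct, and writing $\gamma=\pi_B^k u$ and combining it with $[a,u]\in\pi_B^t\oink_B$ and $\mathrm{dist}=m-2\min\{0,m,v(\gamma)\}\le t$ does close every case --- but it is work that the pairwise normal form renders unnecessary. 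What your route buys is the explicit description $\Da^{[t]}=Z_t I+\mathcal{M}_\Da^t$ and a concrete parametrization of the whole ball; what the paper's buys is brevity.
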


\begin{proof}
We can assume $\Da=\matrici_2(\oink_B)$.
It is apparent from (\ref{ddd}) that every maximal
order whose distance to $\Da$ does not exceed $t$ contains  $\Da^{[t]}$, so
$\Ha_{\mathfrak{B}}\supseteq\Da^{[t]}$. On
the other hand, if $h$ belongs to every maximal order whose distance to $\Da$ does not exceed $t$, then 
$h=\lbmatrix ab{\pi_B^tc}d$ with $a,b,c,d\in\oink_B$, and its conjugate $uhu^{-1}$ has the same form
 for every unit $u\in\Da^*$.
Setting $u=\lbmatrix 0110$ we prove that $\pi_B^t$ divides $b$, while setting $u=\lbmatrix 10r1$
 we prove that $\pi_B^t$ divides $ra-dr$, for every element $r\in\oink_B$.
 Setting $r=1$ we obtain that $\bar{a}=\bar{d}$,
while other values of $r$ show that this element is central.  We conclude the contention
 $\Ha_{\mathfrak{B}}\subseteq\Da^{[t]}$,
and hence the equality. In particular, this implies $\mathfrak{S}(\Da^{[t]})=
\mathfrak{S}(\Ha_{\mathfrak{B}})\supseteq\mathfrak{B}$.
 On the other hand, if $\Da_2$ lies at a distance $d>t$ from $\Da_1=\Da$,
we can assume that $\Da_2$ is as in (\ref{ddd}), which fails to contain the element $h=\lbmatrix 10{\pi_B^t}1\in\Da^{[t]}$.
It follows that $\mathfrak{B}\supseteq \mathfrak{S}(\Da^{[t]})$, whence we obtain the equality,
and therefore $\mathfrak{B}$ is admisible as claimed.
\end{proof}

Note that the tree $\mathfrak{S}(\oink_B)$ contain no ball of radius $2$ (c.f. Figure 2),
 so given the preceeding lemma, 
Theorem \ref{thb} is a consequence of the following result:

\begin{lemma}\label{l42}
The intersection of any three maximal orders in $\matrici_2(B)$ contains a conjugate of $\oink_B$.
\end{lemma}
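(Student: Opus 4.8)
The plan is to reduce the statement to a purely tree-theoretic assertion about the spine $\hat{\mathfrak{T}}(K)$ of the branch $\mathfrak{S}(\oink_B)$. Recall that $\oink_B$ is embedded in $\matrici_2(B)$ as the ring of scalar matrices, so a conjugate of $\oink_B$ is $h^{-1}\oink_B h$ for some $h\in\matrici_2(B)^*$. Applying the first paragraph of the proof of Theorem \ref{tha} with $L=B$, so that the centralizer is $C=Z(B)=K$, gives the containment $\hat{\mathfrak{T}}(K)\subseteq\mathfrak{S}(\oink_B)$; that is, $\oink_B\subseteq\Da_v$ for every vertex $v$ of $\hat{\mathfrak{T}}(K)$. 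Hence it would suffice to produce a single $h$ for which the three conjugated orders $h\Da_ih^{-1}$, with $i=1,2,3$, all lie in $\hat{\mathfrak{T}}(K)$: then $\oink_B\subseteq h\Da_ih^{-1}$ for each $i$, and conjugating back yields $h^{-1}\oink_B h\subseteq\Da_1\cap\Da_2\cap\Da_3$, as desired. In short, I only need to move all three given vertices, simultaneously, into a conjugate of $\hat{\mathfrak{T}}(K)$.

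The second step would exploit the description of $\hat{\mathfrak{T}}(K)$ as the minimal connected subgraph containing the ends $\mathbb{P}^1(K)\subseteq\mathbb{P}^1(B)$, its defining property in Lemma \ref{l33}, together with the identification of the ends of $\mathfrak{T}(B)$ with $\mathbb{P}^1(B)$. Since the convex hull of $\Da_1,\Da_2,\Da_3$ is a finite subtree with at most three extremal vertices, I would choose three distinct ends $\xi_1,\xi_2,\xi_3\in\mathbb{P}^1(B)$, obtained by prolonging the legs of this tripod to infinity and padding with an arbitrary extra end in degenerate cases, so that $\Da_1,\Da_2,\Da_3$ all lie in the tripod spanned by $\xi_1,\xi_2,\xi_3$. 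The matrices act on $\mathbb{P}^1(B)$ by the M\"obius formula (\ref{mob}), and this action is compatible with the conjugation action on vertices and carries convex hulls to convex hulls. Because $0,1,\infty\in K$ belong to $\mathbb{P}^1(K)$, it then suffices to exhibit a M\"obius transformation $h$ with $h(\xi_1)=0$, $h(\xi_2)=1$, $h(\xi_3)=\infty$: applying the tree automorphism induced by $h$ to the relation ``$\Da_i$ lies in the tripod of $\xi_1,\xi_2,\xi_3$'' puts $h\Da_ih^{-1}$ in the tripod of $h\xi_1,h\xi_2,h\xi_3\in\mathbb{P}^1(K)$, hence inside the convex hull of $\mathbb{P}^1(K)$, which is $\hat{\mathfrak{T}}(K)$. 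This is exactly what Step 1 requires.

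The only point demanding genuine care is the existence of such an $h$, i.e. the $3$-transitivity of the M\"obius action of $\matrici_2(B)^*$ on $\mathbb{P}^1(B)$ for noncommutative $B$. This is elementary but the noncommutativity must be respected: I would first send $\xi_3$ to $\infty$ by $z\mapsto(z-\xi_3)^{-1}$, then translate the image $y_1$ of $\xi_1$ to $0$ by $z\mapsto z-y_1$, and finally normalize the image $w$ of $\xi_2$ to $1$ by the left scaling $z\mapsto w^{-1}z$; each map is visibly of the form (\ref{mob}) and fixes the points already placed, so their composite is the required $h$. I expect this transitivity bookkeeping, rather than any deeper structural input, to be the main obstacle, since everything else is formal once the reduction in Step 1 is in place. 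The degenerate configurations, where two of the $\Da_i$ coincide or the median equals one of them, are handled by the same argument using only two ends in place of three.
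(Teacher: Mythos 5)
Your proposal is correct and follows essentially the same route as the paper's proof: choose three ends beyond the given orders, use $3$-transitivity of the M\"obius action of $\mathrm{GL}_2(B)$ on $\mathbb{P}^1(B)$ to move them to $0,1,\infty$, and invoke the containment $\hat{\mathfrak{T}}(K)\subseteq\mathfrak{S}(\oink_B)$ coming from Theorem \ref{tha} with $L=B$. The only cosmetic slip is the claim that the inversion $z\mapsto(z-\xi_3)^{-1}$ is ``visibly of the form (\ref{mob})''---that formula covers only the affine maps $U\mapsto(xU+y)z^{-1}$, so the action of the non-triangular matrix inducing the inversion on vertices must be checked separately, exactly as the paper does for $\lbmatrix 0110$.
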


\begin{proof}
Given any three maximal orders, we can choose any three ends of the tree beyond them, as in Figure 4.
We claim that $\mathrm{PGL}_2(B)$ acts transitively on triples of diferent ends.   We can, 
therefore, assume that the ends beyond our three orders correspond to $1$, $0$, and 
$\infty$, whence $\oink_B$ is contained in the three given maximal orders by Theorem 1. It remains to prove the claim.
The result follows if we prove that every triple is in the orbit of $(\infty,0,1)$. The group of translations
$x\mapsto x+b$, a particular case of the transformation in (\ref{mob}),
 acts transitively on $B=\mathbb{P}^1(B)-\{\infty\}$. On the other hand
$$\lbmatrix 0110 \Da_{U_n}{\lbmatrix 0110}^{-1}=\Da_{U_{-n}},\qquad
\Da_{U_k}=\lbmatrix {\oink_K}{\pi_B^k\oink_K}{\pi_B^{-k}\oink_K}{\oink_K},$$
where $U_k$ denotes the ball of radius $\rho(\pi_B^k)$ centered at $0$,
whence conjugation by $\lbmatrix 0110$
allows us to replace $\infty$, i.e., the end corresponding to the ascending ray 
$\gamma:\mathfrak{N}\rightarrow\mathfrak{T}$ satisfying $\gamma_V(n)=\Da_{U_{-n}}$,
with $0$,  the end corresponding to the  ray $\gamma'$ satisfying $\gamma'_V(n)=\Da_{U_n}$.
 We conclude that
$\mathrm{PGL}_2(B)$ acts transitively on $\mathbb{P}^1(B)$. Then a map of the form
$x\mapsto x+b$ allows us to replace any finite end by $0$ while stabilizing $\infty$. Finally, a map of the form
$x\mapsto ax$ is transitive in $\mathbb{P}^1(B)-\{0,\infty\}$, but leaves $0$ and $\infty$ invariant.
The result follows.
\end{proof}

\begin{figure}
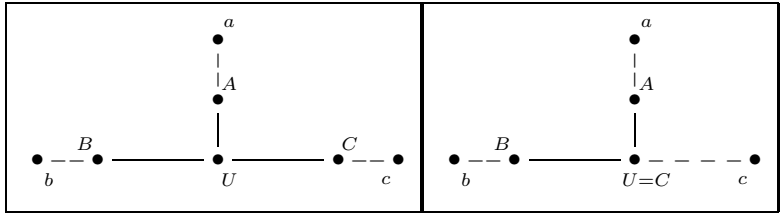

\[ \fbox{ \xygraph{
!{<0cm,0cm>;<.8cm,0cm>:<0cm,.8cm>::} 
!{(9,16) }*+{\bullet}="a"   !{(9.2,16.2) }*+{{}^a}="an"
!{(9,15) }*+{\bullet}="A"   !{(9.2,15.2) }*+{{}^{A}}="An"
!{(9,14) }*+{\bullet}="U"   !{(9.2,13.6) }*+{{}^{U}}="Un"
!{(7,14) }*+{\bullet}="B"   !{(6.8,14.2) }*+{{}^{B}}="Cn"
!{(11,14) }*+{\bullet}="C"  !{(11.2,14.2) }*+{{}^{C}}="Dn"
!{(6,14) }*+{\bullet}="b"   !{(6.2,13.6) }*+{{}^b}="bn"
!{(12,14) }*+{\bullet}="c"  !{(11.8,13.6) }*+{{}^c}="cn"
"a"-@{--}"A" "A"-"U" "U"-"B" "U"-"C" "B"-@{--}"b" "C"-@{--}"c"
} }
\fbox{ \xygraph{
!{<0cm,0cm>;<.8cm,0cm>:<0cm,.8cm>::} 
!{(9,16) }*+{\bullet}="a"   !{(9.2,16.2) }*+{{}^a}="an"
!{(9,15) }*+{\bullet}="A"   !{(9.2,15.2) }*+{{}^{A}}="An"
!{(9,14) }*+{\bullet}="U"   !{(9.2,13.6) }*+{{}^{U=C}}="Un"
!{(7,14) }*+{\bullet}="B"   !{(6.8,14.2) }*+{{}^{B}}="Cn"
!{(6,14) }*+{\bullet}="b"   !{(6.2,13.6) }*+{{}^b}="bn"
!{(11,14) }*+{\bullet}="c"  !{(10.8,13.6) }*+{{}^c}="cn"
"a"-@{--}"A" "A"-"U" "U"-"B"  "B"-@{--}"b" "U"-@{--}"c"
} }
\]
\caption{ The disposition of maximal orders and ends in the proof of Lemma \ref{l42}. Some continuous lines might be reduced to points, e.g., $C=U$ is possible, as on the left.}
\label{figmob}
\end{figure}

In order to prove Theorem \ref{thc},  we need to recall a few facts from the theory of 
representation by spinor genera. See  \cite{abelianos} or \cite{cyclic} for details.
Let $\Da$ be a full-rank order in a central simple algebra $\algeb$ over a number field $E$. We define the adelizations
 as follows:
\begin{enumerate}
\item $\Da_\ad=\prod_\wp\Da_\wp$ with the product topology, where the product is taken over all completions of $E$,
setting $\Da_\wp=\algeb_\wp$ when $\wp$ is archimedean.
\item $\algeb_{\ad}=\left\{a\in\prod_\wp\algeb_\wp\Big|a_\wp\in\Da_\wp
\textnormal{ for all but a finite number of  places }\wp\right\}$
with the only topology making each translation $x\mapsto a+x$ an open embedding of $\Da_\ad$, deffined above,
into $\algeb_{\ad}$.
\end{enumerate}
The group of units $\algeb_{\ad}^*$ acts on the set of full orders in $\algeb$. In fact, for any order $\Da$ and
every invertible adelic element $a\in\algeb_{\ad}^*$, there is a unique full order $\Da'$ satisfying $\Da'_\wp=
a_\wp\Da_\wp a^{-1}_\wp$ at every place $\wp$. We write $\Da'=a\Da a^{-1}$.
 With this definition, a genus is an $\algeb_{\ad}^*$-orbit,
while a class is an orbit under $\algeb^*$, diagonally identified with a subset of $\algeb_{\ad}^*$.

For any genus 
$\mathbb{O}=\mathrm{gen}(\Da)$ of full-rank orders, the spinor class field is, by definition, the class field $\Sigma$ 
corresponding to the class group  $E^*H(\Da)$, where $H(\Da)$ is the
group of reduced norms of adelic elements in $\algeb_\ad^*$ stabilizing $\Da$ by conjugation. If 
$\mathrm{dim}_E\algeb>4$ or $\algeb$ is an indefinite quaternion algebra, this fields classify 
conjugacy classes in the genus $\mathbb{O}$ in the sense that there exists an explicit map $\rho:\mathbb{O}\times\mathbb{O}\rightarrow\mathrm{Gal}(\Sigma/E)$ which is trivial precisely on pairs of isomorphic orders.
For any suborder $\Ha\subseteq\Da$ we define $\Phi=\Phi(\Da|\Ha)=\{\rho(\Da,\Da')|\Ha\subseteq\Da'\}$.
We let $\Delta$ be the group generated by $\Phi$, and we define $\Gamma=
\{\gamma\in\mathrm{Gal}(\Sigma/E)|\gamma\Phi=\Phi\}$.  The fixed fields $\Sigma^\Delta$ and $\Sigma^\Gamma$
are called the lower  representation field and upper representation field, respectively. 
When  $\Phi$ is a group, both fields coincide, and this common field is called the representation field. This is the case
if $\Ha$ is commutative and $\Da$ is maximal \cite[Theorem 1.1]{abelianos}.

To prove that a field $M\supseteq E$ is contained in $\Sigma^\Delta$ one simply proves that
the local spinor image
$$H_\wp(\Da|\Ha)=\{N(a)|a\in\algeb_\wp, \Ha_\wp\subseteq a_\wp\Da a_\wp^{-1}\}$$
is contained in the adelic class group $H_M=F^*N_{M/F}(J_M)$ for every finite or infinite place $\wp$.
At infinite places, this just mean that the algebra ramifies at every place where the extension $M/F$ does.
To prove that $M\supseteq\Sigma^\Gamma$, it suffices to prove that
the local norm $N_{M/F}(M\otimes_EE_\wp)$ is contained in 
$E^*\prod_{\mathfrak{q}}H_{\mathfrak{q}}(\Ha|\Ha)$, defined as above, for every local place $\wp$,
since clearly $H_{\mathfrak{q}}(\Ha|\Ha)H_{\mathfrak{q}}(\Da|\Ha)=H_{\mathfrak{q}}(\Da|\Ha)$
at every local place $\mathfrak{q}$.

\subparagraph{Proof of Theorem \ref{thc}:} 
Let  $N=\tilde{N}M$, where $\tilde{N}$ is a ramified quadratic extension of $E$ that splits at both $\wp$ and $\wp'$.
 By the condition that $M/E$ is inert at $\wp$ and $\wp'$, we conclude that $M$ embeds into $B$,
 and therefore there is an embedding $\phi:N\rightarrow\matrici_2(B)$ for which $M'=\phi(M)$ is contained in
the ring $B\mathfrak{1}$ of diagonal matrices with equal diagonal entries, which we identify with $B$ as before.
 We can also assume $\phi(\tilde{N})\subseteq\matrici_2(E)$, so $\phi(N)\subseteq\matrici_2(M')$.
Note that $\phi|_{\tilde{N}}$ can be diagonalized
at either place $\wp$ or $\wp'$, i.e., we can choose coordinates locally in a way
that the embedding is diagonal. Choose such diagonalization for each place $\mathfrak{q}\in\{\wp,\wp'\}$.
Note that this change of variables can be seen as a conjugation in $\matrici_2(E_\mathfrak{q})$,
 so we still have $M'_\mathfrak{q}=\phi(M_\mathfrak{q})\subseteq B_\mathfrak{q}$.
We let $\Ha\subseteq \phi(N)$ be maximal outside  $\{\wp,\wp'\}$, while we set
$$\Ha_{\mathfrak{q}}=\left\{\lbmatrix {l_1}00{l_2}\in \phi(N)\Big|l_1-l_2\in\mathfrak{m}_{M'_\mathfrak{q}}\right\},
\qquad \mathfrak{q}\in\{\wp,\wp'\}.$$
The latter condition implies that the residual algebra of $\Ha$ with respect to the maximal order
$\matrici_2\left(\oink_{B_q}\right)$ is
$$\mathbb{H}=\left[\Ha+\matrici_2\left(\mathfrak{m}_{B_\mathfrak{q}}\right)\right]
/\matrici_2\left(\mathfrak{m}_{B_\mathfrak{q}}\right)
\subseteq\left[\oink_{B_\mathfrak{q}}+
\matrici_2\left(\mathfrak{m}_{B_\mathfrak{q}}\right)\right]/
\matrici_2\left(\mathfrak{m}_{B_\mathfrak{q}}\right)
=\mathbb{B},$$ where $\mathbb{B}$ is the residual field of $B_\mathfrak{q}$,  and therefore the vertex
$\matrici_2\left(\oink_{B_\mathfrak{q}}\right)$ is a node in $\mathfrak{S}(\Ha)$.
Note that for $\mathfrak{q}\in\{\wp,\wp'\}$, the local order $\Ha_\mathfrak{q}$ is contained in
infinitely many maximal orders, since it is invariant under conjugation by the diagonal matrix
$\mathrm{diag}(\pi_{E_\mathfrak{q}},1)$, which act on the ball tree as multiplication by the uniformizer 
$\pi_{E_\mathfrak{q}}$. On the other hand $\oink_{M_\mathfrak{q}}\subseteq\Ha_\mathfrak{q}$,
whence $\mathfrak{S}(\Ha_\mathfrak{q})\subseteq\mathfrak{S}(\oink_{M_\mathfrak{q}})$. 

 Let $\Da_0$ be a maximal order in $\matrici_2(B)$ containing $\Ha$, so that $\Da_{0\mathfrak{q}}$ corresponds
 to a node of $\mathfrak{S}(\Ha_\mathfrak{q})$  for $\mathfrak{q}\in\{\wp,\wp'\}$.
 Now let $\Da$ be the full-rank order defined locally by $\Da_{\mathfrak{q}}=\Da_{0\mathfrak{q}}$ at all
finite places  $\mathfrak{q}\notin\{\wp,\wp'\}$, while $\Da_{\mathfrak{q}}=\Da_{0\mathfrak{q}}^{[1]}$
for $\mathfrak{q}\in\{\wp,\wp'\}$. By Lemma \ref{blemma}, this order is the intersection of all neighbors
of $\Da_{0\mathfrak{q}}$ at either place. We conclude that $\Da$ is the intersection of a finite family
of maximal orders.

Assume we replace $\mathfrak{A}$ by a matrix algebra $\mathfrak{A}'$
 and $\Da_0$ by a maximal order $\Da'_0$ in $\mathfrak{A}'$
containing a copy $\widetilde{\Ha}$ of $\oink_N$.
 It follows from \cite[Corollary 2]{abelianos} that $M$ is the representation  field for the pair
$(\Da'_0|\widetilde{\Ha})$, and therefore $H_{\mathfrak{q}}(\Da'_0|\widetilde{\Ha})\subseteq
E^*N_{M/E}(J_M)$ at every place $\mathfrak{q}$. 
Now we make three important remarks:
\begin{enumerate}
\item The computation of $\Phi_{\mathfrak{q}}=H_{\mathfrak{q}}(\Da'_0|\widetilde{\Ha})$ is local.
\item   The generated group $\langle \Phi_{\mathfrak{q}}\rangle$
 is independent of the maximal order  $\Da'_0\supseteq\widetilde{\Ha}$.
\item $\langle \Phi_{\mathfrak{q}}\rangle$
 is independent of the chosen copy $\widetilde{\Ha}$ of $\oink_N$.
\end{enumerate}
 The second statement holds since replacing $\Da'_0$ by a conjugate would replace $\Phi_{\mathfrak{q}}$
 by a set of the form $f\Phi_{\mathfrak{q}}$
with $f\in \Phi_{\mathfrak{q}}$, which generates the same group as 
$\Phi_{\mathfrak{q}}$, since the lattter contains the identity. 
We conclude that  
\begin{equation}\label{eqq}
\langle H_{\mathfrak{q}}(\Da|\Ha)\rangle = \langle H_{\mathfrak{q}}(\Da_0|\Ha)\rangle=
\langle H_{\mathfrak{q}}(\Da'_0|\widetilde{\Ha})\rangle\subseteq E^*N_{M/E}(J_M)
\end{equation}
at all places $\mathfrak{q}$,
except maybe $\wp$ and $\wp'$. Next we prove that the same holds at these two places. 
Theorem 1 implies that the local maximal orders $\Da_1$ for which 
$\oink_{M'_\mathfrak{q}}\subseteq\Da_1^{[1]}$ 
are the nodes of 
$\hat{\mathfrak{T}}(M'_\mathfrak{q})$, as $M'_\mathfrak{q}$
 is its own centralizer in the quaternion algebra $B_{\mathfrak{q}}$.
On the other hand, the distance between every pair of nodes in 
$\hat{\mathfrak{T}}(M'_\mathfrak{q})\cong\mathfrak{T}(M'_{\mathfrak{q}})/2$
  is always even. This implies that
$H_{\mathfrak{q}}(\Da|\Ha)\subseteq \oink_{\mathfrak{q}}^*E_{\mathfrak{q}}^{*2}$ at 
$\mathfrak{q}\in\{\wp,\wp'\}$, and the contention $M\subseteq\Sigma^\Delta$ follows since $M/E$ is unramified.

For the opposite contention, for $\mathfrak{q}\in\{\wp,\wp'\}$,
we observe that $\Ha_{\mathfrak{q}}$ is  invariant under conjugation  by
$\mathrm{diag}(\pi_{E_\mathfrak{q}},1)$, and every unit of the form $\mathrm{diag}(u_1,u_2)$,
with $u_1,u_2\in\oink_{M_{\mathfrak{q}}}^*$. This impplies the contention
$H_{\mathfrak{q}}(\Ha|\Ha)\supseteq\oink_{\mathfrak{q}}^*E_{\mathfrak{q}}^{*2}$.
 Note that $\oink_{\mathfrak{q}}^*E_{\mathfrak{q}}^{*2}$
is the local norm group for the extension $M/E$ at both places, as they are inert by hypotheses.
The corresponding contention at all other places follows from equation (\ref{eqq})
and \cite[Corollary 2]{abelianos}, as before.
\qed

\section{On roses}\label{sec4}

Let $L\subseteq B$ be a subalgebra, and let $C\subseteq B$ be its centralizer as in \S1. We define the
$r$-residual rings $\mathbb{B}_r=\oink_B/\mathfrak{m}_B^r$, $\mathbb{L}_r=(\oink_L+\mathfrak{m}_B^r)/\mathfrak{m}_B^r$,
and the centralizer $\mathbb{C}_r=C_{\mathbb{B}_r}(\mathbb{L}_r) $.
 We define the full
rose $\mathfrak{FR}(L)$ of $L$ as a rose with a level function defined on its vertices as follows:
\begin{itemize} \item There is one vertex $v$, namely the center,
 of level $0$. \item The vertices of level $r$, for $0<r\leq e'$,
 are all elements of $\mathbb{C}_r$. \item A vertex $c$ of level $r$ is joint by an edge to
its reduction modulo 
 $\mathfrak{m}_B^{r-1}$. \item All vertices of level $1$ are joined to $v$.
\item There are no other edges or vertices in $\mathfrak{FR}(L)$.
\end{itemize}
 It is apparent from the proof of Theorem \ref{tha} that this is, up to isomorphism,
the largest subgraph of $\mathfrak{S}(\oink_L)$ whose vertices corresponds to balls $U$ satisfying
$\rho(\pi_B)^{e'} \leq\rho(U)\leq1$. This motivates us to define the restricted rose $\mathfrak{RR}(L)$
as the connected component of $v$ in the graph obtained by removing from the full rose all edges joining  
$v$ to a vertex of level 1 corresponding to the class of an element in $C$.
All vertices of level $r$ in $\mathfrak{RR}(L)$, for $0<r\leq e'$, are the elements in the centralizer
$\mathbb{C}_r$ that are not congruent, modulo $\mathfrak{m}_B$ to an element
of $\oink_C$. This is the rose referred to in Theorem \ref{tha},
as follows easily from the proof of that theorem and Lemma \ref{l33}.
 The purpose on this section is to provide a precise description of the 
full rose, which determines the restricted rose as described above.

\begin{lemma}\label{thd}
Each element of the residue field $\mathbb{B}$ can be
lifted to an element of $\mathbb{C}_{e'}$.
\end{lemma}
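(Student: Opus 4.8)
The plan is to lift a given $\bar b\in\mathbb{B}$ to an element $x\in\oink_B$ whose class modulo $\mathfrak{m}_B^{e'}$ centralizes $\mathbb{L}_{e'}$, by successive approximation in the powers of $\mathfrak{m}_B$. Concretely, I would construct elements $x^{(k)}\in\oink_B$ with $\overline{x^{(k)}}=\bar b$ in $\mathbb{B}$ and $[x^{(k)},\lambda]\in\mathfrak{m}_B^{k}$ for every $\lambda\in\oink_L$, for $k=1,\dots,e'$, and then take $x=x^{(e')}$; its residue then lies in $\mathbb{C}_{e'}=C_{\mathbb{B}_{e'}}(\mathbb{L}_{e'})$ and reduces to $\bar b$. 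The base case $k=1$ is immediate: any lift works, since $\overline{[x^{(1)},\lambda]}=[\bar b,\bar\lambda]=0$ in the commutative field $\mathbb{B}$, where $\bar\lambda$ is the image of $\lambda$ in $\mathbb{L}\subseteq\mathbb{B}$. All the content sits in the inductive step $k\to k+1$ for $1\le k\le e'-1$.

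For the inductive step I use the identification $\mathfrak{m}_B^{k}/\mathfrak{m}_B^{k+1}\cong\mathbb{B}$, $\pi_B^k w\mapsto\bar w$, and write $\ell_k\colon\mathfrak{m}_B^{k}\to\mathbb{B}$ for the resulting additive leading-coefficient map. From the defining relation $\overline{\pi_B^{-1}s\pi_B}=\sigma^{-1}(\bar s)$ of $\sigma$ (as in the proof of Lemma \ref{l31}) one reads off the transformation rules $\ell_k(ys)=\ell_k(y)\bar s$ and $\ell_k(sy)=\sigma^{-k}(\bar s)\ell_k(y)$ for $y\in\mathfrak{m}_B^{k}$, $s\in\oink_B$. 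Since $e(B/L)=e'$ gives $\mathfrak{m}_L\subseteq\mathfrak{m}_B^{e'}\subseteq\mathfrak{m}_B^{k+1}$, the map $c_k(\lambda):=\ell_k\big([x^{(k)},\lambda]\big)$ vanishes on $\mathfrak{m}_L$ and hence descends to a $\mathbb{K}$-linear map $c_k\colon\mathbb{L}\to\mathbb{B}$; combining the Leibniz rule $[x^{(k)},\lambda\mu]=[x^{(k)},\lambda]\mu+\lambda[x^{(k)},\mu]$ with the transformation rules shows that $c_k$ is a twisted derivation, $c_k(\bar\lambda\bar\mu)=c_k(\bar\lambda)\bar\mu+\sigma^{-k}(\bar\lambda)c_k(\bar\mu)$. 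A correction $x^{(k+1)}=x^{(k)}+\pi_B^{k}u$ changes the leading commutator by $(\bar\lambda-\sigma^{-k}(\bar\lambda))\bar u$, so the step succeeds exactly when I can solve
\[ c_k(\bar\lambda)=\big(\sigma^{-k}(\bar\lambda)-\bar\lambda\big)\bar u\qquad(\bar\lambda\in\mathbb{L}) \]
for some $\bar u\in\mathbb{B}$.

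The main obstacle is precisely this solvability, a Hilbert-90-flavoured statement on twisted derivations of the finite field extension $\mathbb{L}/\mathbb{K}$; I would settle it by choosing a primitive element $\theta$ with $\mathbb{L}=\mathbb{K}(\theta)$ and splitting according to whether $\sigma^{-k}$ fixes $\mathbb{L}$, i.e.\ whether $f\mid k$ (note $\sigma$ is the $q$-power Frobenius, so it stabilizes the subfield $\mathbb{L}$ and $\sigma^{-k}|_{\mathbb{L}}=\mathrm{id}$ iff $f\mid k$). If $f\nmid k$, then $\sigma^{-k}(\theta)\neq\theta$, and I set $\bar u=c_k(\theta)\big(\sigma^{-k}(\theta)-\theta\big)^{-1}$; both sides of the displayed identity are $(\mathrm{id},\sigma^{-k})$-twisted derivations (the right-hand side being the inner one attached to $\bar u$) agreeing on the generator $\theta$, hence they agree on all of $\mathbb{L}$. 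If $f\mid k$, then $\sigma^{-k}|_{\mathbb{L}}=\mathrm{id}$, so $c_k$ is an ordinary $\mathbb{K}$-derivation of $\mathbb{L}$; applying it to the separable minimal polynomial $p$ of $\theta$ gives $p'(\theta)c_k(\theta)=0$ with $p'(\theta)\neq0$, so $c_k=0$ and the identity holds with $\bar u=0$. In either case the induction continues, and $x=x^{(e')}$ is the desired lift.

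Two routine points I would check along the way are that $\mathbb{L}$ really is a commutative finite field even when $L$ is a noncommutative division subalgebra—this is Wedderburn's theorem applied to the finite residue ring $\oink_L/\mathfrak{m}_L$, so the primitive-element argument applies verbatim—and that the additions $\pi_B^{k}u\in\mathfrak{m}_B$ never disturb the residue $\bar b$ nor the lower-order vanishing already achieved, which is the standard bookkeeping of successive approximation.
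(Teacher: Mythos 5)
Your proof is correct, but it takes a genuinely different route from the paper's. The paper's argument is structural and geometric: it writes $L=K[F,\pi_L]$ with $F/K$ unramified and $\rho(\pi_L)=\rho(\pi_B)^{e'}$, so that $\oink_L$ and $\oink_F$ have the same image in $\mathbb{B}_{e'}$ and only the unramified part of $L$ needs to be centralized; it then uses Lemma \ref{l31} (for unramified $F$ one gets $f\big(B/C_B(F)\big)=e(F/K)=1$, so the ring of integers of the centralizer of $F$ already surjects onto $\mathbb{B}$) together with Lemma \ref{l33} and the description of $\mathfrak{S}(\oink_F)$ from Theorem \ref{tha} to exhibit, in each residue class, a representative that commutes with $\oink_F$ on the nose. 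You replace all of this by a self-contained successive-approximation scheme in which the level-$k$ obstruction is a $(\mathrm{id},\sigma^{-k})$-twisted derivation $c_k:\mathbb{L}\to\mathbb{B}$, killed either by an inner twisted derivation when $\sigma^{-k}|_{\mathbb{L}}\neq\mathrm{id}$, or by separability of $\mathbb{L}/\mathbb{K}$ when $f\mid k$; I checked the transformation rules for $\ell_k$, the effect of the correction $\pi_B^k u$, and the two cases, and they are all sound. What the paper's route buys is brevity given machinery already in place (and the identification of the full rose with a piece of $\mathfrak{S}(\oink_F)$ is reused in \S\ref{sec4}); what yours buys is independence from the tree machinery and from the decomposition $\oink_L=\oink_F[\pi_L]$, plus the finer information that obstructions can only occur at levels not divisible by $f$. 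Two small points worth making explicit: in the case $f\nmid k$, agreement of $c_k$ and the inner twisted derivation on the generator $\theta$ yields agreement on $\mathbb{K}[\theta]=\mathbb{L}$ only because both also vanish on $\mathbb{K}$ ($c_k$ because $\oink_K$ is central in $B$, the inner one because $\sigma$ fixes $\mathbb{K}$); and $\sigma$ need not be the $q$-power Frobenius itself, only a generator of $\mathrm{Gal}(\mathbb{B}/\mathbb{K})$, but its restriction to $\mathbb{L}$ still generates $\mathrm{Gal}(\mathbb{L}/\mathbb{K})$ and hence has exact order $f$, which is all your case division uses.
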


\begin{proof}
 Set $L=K[F,\pi_L]$, where $F/K$ is an unramified field extension,
and $\rho(\pi_L)=\rho(\pi_B)^{e'}$.
Note that the image of $\oink_L$ in $\mathbb{B}_{e'}$ coincide with the image of $\oink_F$, whence the full
rose  $\mathfrak{FR}(L)$ coincide with the part of the graph $\mathfrak{S}(\oink_F)$ corresponding to balls
whose radii $r$ satisfy $\rho(\pi_B)^{e'}\leq r \leq1$. It suffices to prove that
$\mathfrak{S}(\oink_F)$ has a vertex corresponding to a ball of radius $\rho(\pi_B^{e'})=\rho(\pi_L)$ with a center in 
each residual class in $\mathbb{B}$. This is immediate from Lemma \ref{l33} and
the proof of Theorem \ref{tha}, since the centralizer 
of $F$ contains a maximal unramified subfield of $B$.
\end{proof}

\begin{theorem}\label{thd}
Let $B$ be a $n^2$-dimensional central division algebra over a local field $K$.
 Let $L\subseteq B$ be simple subalgebra.
Let $f=f(L/K)$ be the residual degree, let $e=e(L/K)$ be the ramification index, 
and let $e'=\frac{n}{e}$. 
The full rose $\mathfrak{FR}(L)$ of $L$ is the largest possible $(q^n-q^{e'},e')$-rose having 
nodes only at vertices whose distance to the center is divisible by $f$.
\end{theorem}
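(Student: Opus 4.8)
The plan is to compute the reduction maps $\mathbb{C}_r\to\mathbb{C}_{r-1}$ explicitly and to read off the branching of $\mathfrak{FR}(L)$ from the cardinalities of their fibres. First I would fix a convenient model for $\mathbb{L}_r$ inside $\mathbb{B}_r$. Writing $L=K[F,\pi_L]$ as in the preceding lemma, with $F/K$ unramified and $\rho(\pi_L)=\rho(\pi_B)^{e'}$, the element $\pi_L$ lies in $\mathfrak{m}_B^{e'}$; hence for every level $r\le e'$ its image in $\mathbb{B}_r=\oink_B/\mathfrak{m}_B^r$ vanishes, and $\mathbb{L}_r$ is exactly the image of $\oink_F$, that is, the Teichm\"uller copy of the residue field $\mathbb{F}_{q^f}$ sitting inside $\mathbb{B}_r$. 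This is the step where the hypothesis $r\le e'$ enters essentially, and I expect it to demand the most care, since one must verify that no higher $\oink_L$-contribution survives modulo $\mathfrak{m}_B^r$ (the same bound $r\le e'\le n$ keeps $\pi_K\sim\pi_B^n$ from entering).

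Next I would identify $\mathbb{B}_r$ with a truncated twisted polynomial ring. Using the standard structure $\oink_B=\oink_{B_0}[\pi_B]$, where $B_0$ is the maximal unramified subfield, $\pi_B x\pi_B^{-1}=\sigma(x)$ for $x\in\oink_{B_0}$, and $\pi_B^n$ is a uniformizer of $K$ up to a unit, the bound $r\le e'\le n$ yields $\mathbb{B}_r\cong\mathbb{F}_{q^n}[\pi;\sigma]/(\pi^r)$, with $\sigma$ the residual Frobenius generating $\mathrm{Gal}(\mathbb{F}_{q^n}/\mathbb{F}_q)$. Writing a general element as $\sum_{j=0}^{r-1}a_j\pi^j$, a direct commutator computation against $b\in\mathbb{F}_{q^f}$ gives $\big(\sum_j a_j\pi^j\big)b-b\big(\sum_j a_j\pi^j\big)=\sum_j a_j\big(\sigma^j(b)-b\big)\pi^j$. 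Since $\sigma|_{\mathbb{F}_{q^f}}$ is the order-$f$ Frobenius, $\sigma^j$ fixes $\mathbb{F}_{q^f}$ pointwise precisely when $f\mid j$, and comparing $\pi^j$-coefficients forces $a_j=0$ whenever $f\nmid j$. Hence $\mathbb{C}_r=\big\{\sum_{f\mid j,\ 0\le j<r}a_j\pi^j : a_j\in\mathbb{F}_{q^n}\big\}$.

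From this description the reduction map $\mathbb{C}_r\to\mathbb{C}_{r-1}$, which merely forgets the $\pi^{r-1}$-coefficient, is a bijection when $f\nmid(r-1)$ (the top coefficient is already forced to be $0$) and a surjection with fibres of cardinality $q^n$ when $f\mid(r-1)$ (the top coefficient is free). Translating back to $\mathfrak{FR}(L)$, a vertex at distance $d=r-1$ from the center has exactly one child when $f\nmid d$ and $q^n$ children when $f\mid d$ and $d\le e'-1$, while the level-$e'$ vertices are leaves; in particular the center ($d=0$) carries $q^n$ edges. Consequently every vertex of valency $\ge 3$ occurs at a distance divisible by $f$, the branching at each such vertex is the maximal value $q^n$, and the radius is exactly $e'$, so that $\mathfrak{FR}(L)$ is the largest rose of radius $e'$ subject to the stated node-placement constraint, which is the assertion of the theorem; deleting the $q^{e'}$ central edges whose level-$1$ endpoints lie in the residue image $\mathbb{C}\cong\mathbb{F}_{q^{e'}}$ of $\oink_C$ then recovers the restricted $(q^n-q^{e'},e')$-rose of Theorem \ref{tha}. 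Finally, to confirm that all $q^n$ branches do persist to full depth $e'$, rather than merely at each individual step, I would invoke the preceding lemma, which guarantees that every class of $\mathbb{B}$ lifts all the way to $\mathbb{C}_{e'}$.
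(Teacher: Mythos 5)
Your proof is correct, but it takes a genuinely different route from the paper's. The paper argues by symmetry: it combines (a) the translational symmetry of $\mathbb{C}_r$ (every fibre of $\mathbb{C}_r\to\mathbb{C}_1$ is a translate of the kernel, and Lemma 5.1 gives surjectivity, so all branches of the rose are isomorphic) with (b) the action of $\mathrm{GL}_2(C)$, which commutes with $\oink_L$ and is transitive on the nodes of $\hat{\mathfrak{T}}(C)$, producing a ``vertical'' periodicity of period $f$ along the rose; iterating the two symmetries places the nodes exactly at levels divisible by $f$. You instead compute the centralizers outright: after normalizing so that the inertia subfield $F$ of $L$ sits inside the chosen maximal unramified subfield $B_0$ (a harmless conjugation you should state explicitly, since otherwise the image of $\oink_F$ need not land in the distinguished copy of $\mathbb{F}_{q^n}$), the identification $\mathbb{B}_r\cong\mathbb{F}_{q^n}[\pi;\sigma]/(\pi^r)$ for $r\le e'\le n$ is legitimate — the image of $\oink_{B_0}$ is automatically the subfield $\mathbb{F}_{q^n}$ because $\mathfrak{m}_B^r\cap\oink_{B_0}=\mathfrak{m}_{B_0}$ — and your commutator calculation gives $\mathbb{C}_r=\bigl\{\sum_{f\mid j,\,j<r}a_j\pi^j\bigr\}$, from which the fibre sizes $1$ and $q^n$ of the reduction maps, and hence the node placement, follow immediately. (A cosmetic point: $\sigma$ is the reduction of conjugation by $\pi_B$, hence a generator of $\mathrm{Gal}(\mathbb{F}_{q^n}/\mathbb{F}_q)$ determined by the Hasse invariant rather than the Frobenius itself; your argument only uses that it generates, so nothing breaks.) Your computation buys more than the paper's: it gives the exact number $q^{n\lceil d/f\rceil}$ of vertices at each level and reproves Lemma 5.1 as a byproduct, since the reduction maps are visibly surjective; the paper's argument is coordinate-free and ties the rose directly to the geometry of $\hat{\mathfrak{T}}(C)$ used in the proof of Theorem 1.5. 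You also correctly flag that the \emph{full} rose has center valency $q^n$, and that the $(q^n-q^{e'},e')$ count of the statement pertains to the restricted rose obtained by deleting the $q^{e'}$ edges toward residue classes of $\oink_C$ — this matches the paper's own definitions, where the theorem's wording is slightly loose on this point.
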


\begin{proof}
 Let $C\subseteq B$ and $\mathbb{C}_r\subseteq\mathbb{B}_r$ be as before.
The full rose has two symmetries that can be exploited to compute it. The first one comes from the translational 
symmetry of the ring $\mathbb{C}_r$. If a particular element $\bar{\epsilon}$ of $\mathbb{C}_1$ can be lifted to an element $\epsilon\in\mathbb{C}_r$, the set of all its possible liftings has the form $\epsilon+U_r$ where 
$U_r=\mathrm{ker}(\mathbb{C}_r\rightarrow\mathbb{C}_1)$
is the set of liftings of $\bar{0}\in\mathbb{C}_1$ to $\mathbb{C}_r$. This means that the corresponding 
branches in the rose look similar up to the level $r$. By
Lemma \ref{thd} we can take $r=e'$  (See Figure 5(ii)).
On the other hand, we know
from the proof of Theorem 1.1 that the group $\mathrm{GL}_2(C)$, the centralizer of $L^*$
in $\mathrm{GL}_2(B)$, acts transitively on the nodes of  $\hat{\mathfrak{T}}(C)$. As
$\mathrm{GL}_2(C)$ commutes with $\oink_L$, these conjugations are symmetries of
$\mathfrak{S}(\oink_L)$. This gives us a vertical 
symmetry in the branch below the ball $V_0^{[0]}=\oink_B$ in Figure 5(i)
 that can be use to determine its shape at lower levels in terms of the
higher ones. Note that $V_0^{[f]}$ is the first node of $\hat{\mathfrak{T}}(C)$
 after $V_0^{[0]}$ in the ray joining $V_0^{[0]}$
with the end $0$, as we saw in the proof Theorem \ref{tha} that every vertex between 
$V_0^{[0]}$ and $V_0^{[f]}$ is a bridge. 
This impplies, by the simmetry in Figure 5(ii), 
that the full rose cannot have a node whose level is smaller than $f$,
while all vertices at level $f$ are nodes. Now we apply the simmetry in Figure 5(i)
to prove that there are no nodes between the levels $f$ and $2f$, while each vertex of level $2f$ is a node. 
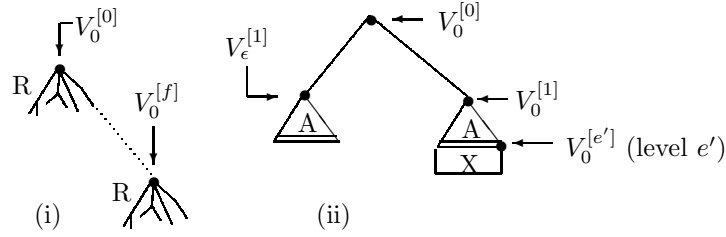
\begin{figure}
\unitlength 1mm 
\linethickness{0.4pt}
\ifx\plotpoint\undefined\newsavebox{\plotpoint}\fi 
\begin{picture}(83.45,29.75)(30,0)
\multiput(39.75,22.75)(-.033482143,-.053571429){112}{\line(0,-1){.053571429}}
\put(40.25,22.75){\line(-1,0){.5}}
\multiput(72.25,19)(-.033482143,-.053571429){112}{\line(0,-1){.053571429}}
\put(68.5,13){\line(1,0){8.75}}
\put(77.25,13){\line(-3,4){4.5}}
\put(72.75,19){\line(-1,0){.5}}
\put(73,15.5){\makebox(0,0)[cc]{A}}
\put(90.25,12.25){\line(1,0){8.75}}
\put(99,12.25){\line(-3,4){4.5}}
\put(94.5,18.25){\line(-1,0){.5}}
\put(94.75,14.75){\makebox(0,0)[cc]{A}}
\put(72.1,19){\makebox(0,0)[cc]{ $\bullet$}}
\put(98.2,12.3){\makebox(0,0)[cc]{ $\bullet$}}
\put(93.8,18.25){\makebox(0,0)[cc]{ $\bullet$}}
\put(39.5,22.5){\makebox(0,0)[cc]{ $\bullet$}}
\put(52,7.5){\makebox(0,0)[cc]{ $\bullet$}}
\put(65,26){\makebox(0,0)[cc]{$V_{\epsilon}^{[1]}$}}
\put(65,23){\line(0,-1){4}}
\put(65,19){\vector(1,0){4}}
\put(81.5,29){\makebox(0,0)[cc]{$\bullet$}}
\put(98.75,12){\line(0,-1){3.25}}
\put(98.75,8.75){\line(-1,0){8.75}}
\put(90,8.75){\line(0,1){3.25}}
\put(94.5,10){\makebox(0,0)[cc]{X}}
\put(45,28.5){\makebox(0,0)[cc]{$V_0^{[0]}$}}
\put(53,18.5){\makebox(0,0)[cc]{$V_0^{[f]}$}}
\put(41.75,28.75){\line(-1,0){1.55}}
\put(40,28.75){\vector(0,-1){4.25}}
\put(38.5,3){\makebox(0,0)[cc]{(i)}}
\put(76.5,3){\makebox(0,0)[cc]{(ii)}}
\put(90.5,13){\line(1,0){8.25}}
\put(69,13.75){\line(1,0){7.75}}
\put(103.6,18.7){\makebox(0,0)[cc]{$V_0^{[1]}$}}
\put(99.6,18.7){\vector(-1,0){4}}
\put(35,20.5){\makebox(0,0)[cc]{R}}
\put(48.25,6.25){\makebox(0,0)[cc]{R}}
\put(93.25,29.25){\makebox(0,0)[cc]{$V_0^{[0]}$}}
\put(88.25,29.25){\vector(-1,0){4}}
\put(105.5,12.75){\vector(-1,0){5.25}}
\put(118,12.5){\makebox(0,0)[cc]{$V_0^{[e']}$ (level $e'$)}}
\put(37,18){\line(0,-1){1.55}}
\multiput(40,22.5)(.03333333,-.07333333){75}{\line(0,-1){.07333333}}
\multiput(40,22)(-.03125,-.3125){8}{\line(0,-1){.3125}}
\multiput(39.75,19.5)(.03289474,-.05263158){38}{\line(0,-1){.05263158}}
\multiput(39.75,19.25)(-.03947368,-.03289474){38}{\line(-1,0){.03947368}}
\multiput(40,22.75)(.03658537,-.03353659){82}{\line(1,0){.03658537}}
\multiput(43,20)(.03333333,-.05){45}{\line(0,-1){.05}}
\put(49.5,3){\line(0,-1){1.55}}
\multiput(52.25,7.75)(-.033482143,-.053571429){112}{\line(0,-1){.053571429}}
\multiput(52.5,7.5)(.03333333,-.07333333){75}{\line(0,-1){.07333333}}
\multiput(52.5,7)(-.03125,-.3125){8}{\line(0,-1){.3125}}
\multiput(52.25,4.5)(.03289474,-.05263158){38}{\line(0,-1){.05263158}}
\multiput(52.25,4.25)(-.03947368,-.03289474){38}{\line(-1,0){.03947368}}
\multiput(52.5,7.75)(.03658537,-.03353659){82}{\line(1,0){.03658537}}
\multiput(55.5,5)(.03333333,-.05){45}{\line(0,-1){.05}}
\put(52.5,15){\vector(0,-1){5}}
\multiput(44.43,17.93)(.55,-.66667){16}{{\rule{.4pt}{.4pt}}}
\multiput(94,18.25)(-.033482143,-.053571429){112}{\line(0,-1){.053571429}}
\multiput(72.5,19)(.0336538462,.0413461538){260}{\line(0,1){.0413461538}}
\multiput(81.25,29.75)(.0407523511,-.0336990596){319}{\line(1,0){.0407523511}}
\end{picture}
\caption{The two simetries of the full rose. Here $V_a^{[t]}$ is the ball of radius
$\rho(\pi)^t$ around $a\in B$.}
\end{figure}
Iterating this process, the result follows.
\end{proof}

\begin{example}
If $e'\leq f$, then the only node of the full rose is its center. 
\end{example}

\begin{example}
Let $B=K[F,\pi_B]$ be a central division algebra of dimension $8^2$, where
conjugation by the uniformizer $\pi_B$ induces a generator of the Galois group
of the unramified degree-8 extension $F/K$. Let
 $L=K[E,\pi_B^4]$ with $E\subseteq F$ a quadratic extension of $K$. Then $L$ is
a partially ramified quartic extension of $K$. In this case
$f=2$, while $e'=4$. We conclude that every point at level $2$ in the full rose is a node.
\end{example}

\section{Further applications and examples}

\subparagraph{Proof of Theorem \ref{tho}}
For the first statement, assume that
 the maximal power of $2$ dividing $f(L/K)$ fails to divide $n=\sqrt{\dim_KB}$. We conclude
that the residue field $\mathbb{L}$ of $L$  does not embed into $\mathbb{B}$. This impplies that the representation
of $\mathbb{L}$ defined by the natural inclusion $\phi:\mathbb{L}\hookrightarrow\matrici_2(\mathbb{B})$
is irreducible, whence $\mathfrak{S}(\oink_L)$ contains an isolated point, 
and therefore consists exactly of one point.

For the second statement, we observe that  $F$ embeds into $B$, 
so it can be assumed to be a subfield of $B$. As such, it is its own centralizer,
so Theorem \ref{tha} gives us a description of the branch $\mathfrak{S}(\oink_F)$ which contains
$\mathfrak{S}(\oink_L)$. Furthermore, note that $L=F[\pi_L]$ where $\pi_L$ 
satisfies an equation over $F$ of the form
$x^2+\epsilon\pi_Fx+\pi_F=0$, for some $\epsilon\in\oink_F$,
 whence its image in $\matrici_2(B)$ belongs to the centralizer
$\matrici_2(F)$ of $F$, and by an appropiate choice of basis it can be assumed to be of the form
$\phi(\pi_L)=\lbmatrix 0{-\pi_F}1{-\epsilon\pi_F}\in\matrici_2(F)$. Therefore, the branch $\mathfrak{S}(\oink_L)$
must be invariant under conjugation by this matrix.

In order to describe how this matrix acts by conjugation on maximal orders of the form $\Da_U$, where $U$ is
a ball, we observe that $$\phi(\pi_L)\left(\begin{array}c z\\1\end{array}\right)\oink_B=\left(\begin{array}c w\\1\end{array}\right)\oink_B u \textnormal{, where } u=z-\epsilon\pi_F,
w=-\pi_Fu^{-1}.$$
This implies that the corresponding 
action on balls takes the triplet of ends $(\infty,1,0)$ of the ball-tree onto $(0,u'\pi_F,\epsilon^{-1})$,
where $u'=-(1-\epsilon\pi_F)^{-1}$ is a unit. These ends, and their respective images,
 are located as shown in Figure 6. Since every vextex between
$V_0^{[0]}$ and $V_0^{[e(B/F)]}$ is a bridge in $\mathfrak{S}(\oink_F)$, it suffices to show that 
$V_0^{[0]}$ and $V_0^{[e(B/F)]}$ have valency one in $\mathfrak{S}(\oink_L)$. As conjugation by 
$\phi(\pi_L)$ defines a symmetry interchanging them,
it suffices to prove it for $V_0^{[0]}$, but this follows easily from the fact that the image
$\mathbb{H}$ of $\Ha=\oink_L$ in $\matrici_2(\mathbb{B})$ contains the non-trivial nilpotent
element $\overline{\phi(\pi_L)}=\lbmatrix0010$.

\begin{figure}
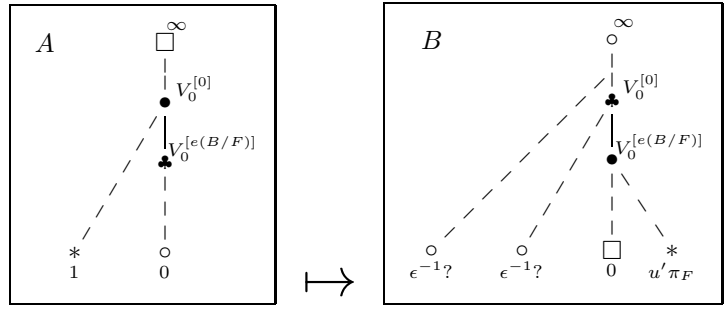

\[ \fbox{ \xygraph{
!{<0cm,0cm>;<.8cm,0cm>:<0cm,.8cm>::} 
!{(7,16) }*+{A}="name"
!{(9,16) }*+{\square}="a"   !{(9.2,16.2) }*+{{}^\infty}="an"
!{(9,15) }*+{\bullet}="A"   !{(9.5,15.2) }*+{{}^{V_0^{[0]}}}="An"
!{(9,14) }*+{\clubs}="U"   !{(9.8,14.2) }*+{{}^{V_0^{[e(B/F)]}}}="Un"
!{(7.5,12.5) }*+{*}="b"   !{(7.5,12.1) }*+{{}^1}="bn"
!{(9,12.5) }*+{\circ}="c"  !{(9,12.1) }*+{{}^0}="cn"
"a"-@{--}"A" "A"-"U"  "A"-@{--}"b" "U"-@{--}"c"
} }\quad\textnormal{\Huge$\mapsto$\normalsize}\quad
\fbox{ \xygraph{
!{<0cm,0cm>;<.8cm,0cm>:<0cm,.8cm>::} 
!{(6,16) }*+{B}="name"
!{(9,16) }*+{\circ}="a"   !{(9.2,16.2) }*+{{}^\infty}="an"
!{(9,15.5) }*+{}="E"  
!{(9,15) }*+{\clubs}="A"   !{(9.5,15.2) }*+{{}^{V_0^{[0]}}}="An"
!{(9,14) }*+{\bullet}="U"   !{(9.8,14.2) }*+{{}^{V_0^{[e(B/F)]}}}="Un"
!{(10,12.5) }*+{*}="b"   !{(10,12.1) }*+{{}^{u'\pi_F}}="bn"
!{(7.5,12.5) }*+{\circ}="d"   !{(7.5,12.1) }*+{{}^{\epsilon^{-1}?}}="dn"
!{(6,12.5) }*+{\circ}="e"   !{(6,12.1) }*+{{}^{\epsilon^{-1}?}}="en"
!{(9,12.5) }*+{\square}="c"  !{(9,12.1) }*+{{}^0}="cn"
"a"-@{--}"A" "A"-"U"  "U"-@{--}"b" "U"-@{--}"c"  "A"-@{--}"d" "E"-@{--}"e" 
} }
\]
\caption{ The disposition of maximal orders and ends in the proof of Theorem \ref{tho}.
The same symbol denotes a vertex, or end, in Figure 6A, and its corresponding image under conjugation by
$\phi(\pi_L)$ in Figure 6B.  The precise
position of the end $\epsilon^{-1}$ depends on $\epsilon\in\oink_F$, if $\epsilon=0$ we
have $\epsilon^{-1}=\infty$, but this is irrelevant to
determine the position of the image path.}
\label{figmob}
\end{figure}

For the final statement, we note that if $L$ is not a field then it contains a non-trivial idempotent,
which can be assumed to be $\lbmatrix1000$. This element is contained precisely in the maximal orders of the form
$\Da_t=\lbmatrix{\oink_B}{\pi_B^t\oink_B}{\pi_B^{-t}\oink_B}{\oink_B}$. On the other hand, $L$ is contained
in the centralizer $C=\lbmatrix B00B$ of $\lbmatrix1000$, and the set of integers  in $C$ is contained
in each order $\Da_t$.  The result follows.\qed

\begin{example}
We end this work by giving a few examples of small admisible branches.
First observe that the family of admissible branches is closed under non-empty intersections, 
since an order contains a particular suborder $\Ha$ if and only if
it contains each element in a generating set of $\Ha$, while a set generates an order if and only 
if it is contained in at least one maximal order.
  In the pictures we assume that $B$ is the unique quaternion division 
algebra over $\Qt_2$:
\begin{enumerate}
\item Figure 7A-B show two intersections of the branch $\mathfrak{S}(\oink_B)$
and a ball centered at the white circle, which is a bridge (Fig. 7A) or a node (Fig. 7B).
\item the flower in Figure 7C is obtained by intersecting the branch $\mathfrak{S}(\oink_B)$ with 
two balls of radius 2 centered at the two white circles.
\item Figure 7D can be obtained intersecting the branch in Figure 7B with a conjugate (c.f. Figure 7B') replacing
 the neighbor of valency 2 at the left of the center with a neighbor on valency 1 (a leaf),
using the $3$-transitivity on ends.
\end{enumerate}

On the other hand, we claim that the branch $\mathfrak{S}$ in Figure 7E is not admissible. 
In order to prove this we choose ends beyond them as in the figure. Note that by the transitivity on triples of ends
 we can always assume that three of the ends are $\infty$, $0$, and $1$. Furthermore,
 it is immediate that the end $a$ is a unit in $\oink_B$ that is not congruent to 
$1$ modulo $\pi_B$, while $1+a$ lies as shown in the picture since is not congruent to either $1$, $0$, or $a$.
Note also that the white circle is the vertex $\Da_0$ corresponding to the ball $V_{0}^{[0]}$
of radius $1$ centered at $0$. Now, let $V_t^{[2]}=t+\pi_B^2\oink_B$ for $t\in\oink_B$ as before, while 
$V_\infty^{[2]}=\pi_B^{[-2]}\oink_B$. Write $\Da_{*,t}=\Da_{V_t^{[2]}}$, for the corresponding maximal
order,  for each $t\in\{a,1+a,1,0,\infty\}$.
Since $\mathfrak{S}=\mathfrak{S}(\Ha_{\mathfrak{S}})$ by the hypotheses that $\mathfrak{S}$ is admissible,
there is a matrix $\eta\in\Ha_{\mathfrak{S}}$ such that $\eta\notin\Da_{*,1+a}$.
However, the condition $\eta\in\Ha_{\mathfrak{S}}$ implies
$\eta\in\Da_{*,t}$ , for each $t\in\{a,1,0,\infty\}$. We claim this cannot be the case.

Note that $\eta$ has integral coefficients since $\eta\in\Da_0$.
The condition $\eta\in\Da_{*,\infty}\cap\Da_{*,0}$ implies that $\eta\equiv\lbmatrix x00y$
modulo $\pi_B^2$ for some $x,y\in\oink_B$.
The condition $\eta\in\Da_{*,a}$ implies $xa\equiv ay\ (\mathrm{mod}\ \pi_B^2)$, since $\eta$ satisfies
$\eta\Lambda_{*,a}\subseteq \Lambda_{*,a}$, where
$$\Lambda_{*,a}=\left(\begin{array}ca\\1\end{array}\right)\oink_B+
\left(\begin{array}c\pi_B^2\\0\end{array}\right)\oink_B.$$ Similarly,  the condition
$\eta\in\Da_{*,1}$ implies $x\equiv y\ (\mathrm{mod}\ \pi_B^2)$. The last two conditions imply
$x(1+a)=(1+a)y\ (\mathrm{mod}\ \pi_B^2)$ and the contradiction follows since this implies
$\eta\in\Da_{*,1+a}$ by the same reasoning.
\begin{figure}
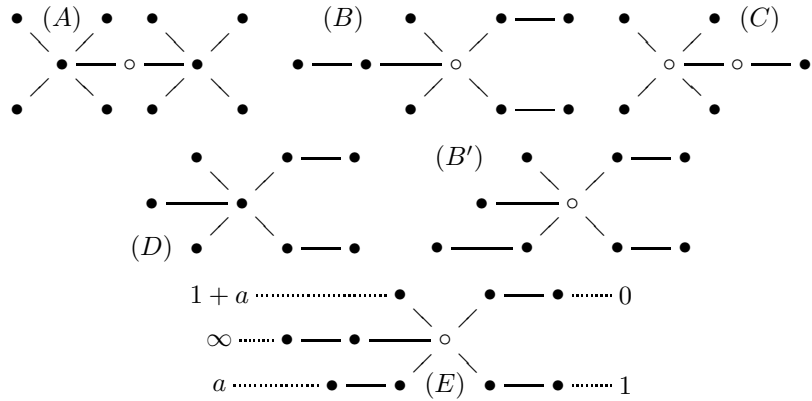

\[ \xygraph{
!{<0cm,0cm>;<.6cm,0cm>:<0cm,.6cm>::} 
!{(0,1) }*+{(A)}="A" 
!{(0,0) }*+{\bullet}="a" 
!{(-1,-1) }*+{\bullet}="b" 
!{(-1,1) }*+{\bullet}="c"
!{(1,-1) }*+{\bullet}="d" 
!{(1,1) }*+{\bullet}="e"
!{(1.5,0) }*+{\circ}="a1" 
!{(3,0) }*+{\bullet}="a0" 
!{(2,-1) }*+{\bullet}="b0" 
!{(2,1) }*+{\bullet}="c0"
!{(4,-1) }*+{\bullet}="d0" 
!{(4,1) }*+{\bullet}="e0"
"a"-"b" "a"-"c" "a"-"d" "a"-"e" "a"-"a1" "a1"-"a0" "a0"-"b0" "a0"-"c0" "a0"-"d0" "a0"-"e0"
} \quad
\xygraph{
!{<0cm,0cm>;<.6cm,0cm>:<0cm,.6cm>::} 
!{(0.5,1) }*+{(B)}="A" 
!{(1,0) }*+{\bullet}="a"  
!{(-.5,0) }*+{\bullet}="e"
!{(3,0) }*+{\circ}="a0" 
!{(2,-1) }*+{\bullet}="b0" 
!{(2,1) }*+{\bullet}="c0"
!{(4,-1) }*+{\bullet}="d0" 
!{(4,1) }*+{\bullet}="e0"
!{(5.5,-1) }*+{\bullet}="d1" 
!{(5.5,1) }*+{\bullet}="e1"
 "a"-"e" "a"-"a0" "a0"-"b0" "a0"-"c0" "a0"-"d0" "a0"-"e0" "d0"-"d1"
"e0"-"e1"
} \quad
\xygraph{
!{<0cm,0cm>;<.6cm,0cm>:<0cm,.6cm>::} 
!{(2,1) }*+{(C)}="A" 
!{(0,0) }*+{\circ}="a" 
!{(-1,-1) }*+{\bullet}="b" 
!{(-1,1) }*+{\bullet}="c"
!{(1,-1) }*+{\bullet}="d" 
!{(1,1) }*+{\bullet}="e"
!{(1.5,0) }*+{\circ}="a1" 
!{(3,0) }*+{\bullet}="a0" 
"a"-"b" "a"-"c" "a"-"d" "a"-"e" "a"-"a1" "a1"-"a0"
}
\]
\[  \xygraph{
!{<0cm,0cm>;<.6cm,0cm>:<0cm,.6cm>::} 
!{(1,-1) }*+{(D)}="A"
!{(1,0) }*+{\bullet}="a"  
!{(3,0) }*+{\bullet}="a0" 
!{(2,-1) }*+{\bullet}="b0" 
!{(2,1) }*+{\bullet}="c0"
!{(4,-1) }*+{\bullet}="d0" 
!{(4,1) }*+{\bullet}="e0"
!{(5.5,-1) }*+{\bullet}="d1" 
!{(5.5,1) }*+{\bullet}="e1"
 "a"-"a0" "a0"-"b0" "a0"-"c0" "a0"-"d0" "a0"-"e0" "d0"-"d1"
"e0"-"e1"
} \qquad
\xygraph{
!{<0cm,0cm>;<.6cm,0cm>:<0cm,.6cm>::} 
!{(0.5,1) }*+{(B')}="A" 
!{(0,-1) }*+{\bullet}="b1"  
!{(1,0) }*+{\bullet}="e"
!{(3,0) }*+{\circ}="a0" 
!{(2,-1) }*+{\bullet}="b0" 
!{(2,1) }*+{\bullet}="c0"
!{(4,-1) }*+{\bullet}="d0" 
!{(4,1) }*+{\bullet}="e0"
!{(5.5,-1) }*+{\bullet}="d1" 
!{(5.5,1) }*+{\bullet}="e1"
"a0"-"e"
 "b1"-"b0" "a0"-"b0" "a0"-"c0" "a0"-"d0" "a0"-"e0" "d0"-"d1"
"e0"-"e1"
} \]
\[
\xygraph{
!{<0cm,0cm>;<.6cm,0cm>:<0cm,.6cm>::} 
!{(3,-1) }*+{(E)}="A"
!{(1,0) }*+{\bullet}="a1"  
!{(-.5,0) }*+{\bullet}="a2"
!{(3,0) }*+{\circ}="a0" 
!{(.5,-1) }*+{\bullet}="b1"
!{(2,-1) }*+{\bullet}="b0" 
!{(2,1) }*+{\bullet}="c0"
!{(4,-1) }*+{\bullet}="d0" 
!{(4,1) }*+{\bullet}="e0"
!{(5.5,-1) }*+{\bullet}="d1" 
!{(5.5,1) }*+{\bullet}="e1"
!{(-2,0) }*+{\infty}="a3"
!{(-2,-1) }*+{a}="b3"
!{(-2,1) }*+{1+a}="c3"
!{(7,-1) }*+{1}="d3" 
!{(7,1) }*+{0}="e3"
 "a1"-"a2" "a1"-"a0" "a0"-"b0" "a0"-"c0" "a0"-"d0" "a0"-"e0" "d0"-"d1" "b0"-"b1"
"e0"-"e1" "a2"-@{.}"a3" "b1"-@{.}"b3" "c0"-@{.}"c3" "d1"-@{.}"d3" "e1"-@{.}"e3"
} 
\]
\caption{ A few admissible branches and a counterexample.}
\label{figmob}
\end{figure}
\end{example}

\section{Acnowledgements}
This first author was suported by Fondecyt, Grant No 1160603. The second author was supported by 
Fondecyt, Grant No 1120844.

\end{document}